\def \h{\textup{\textbf{h}}}
\def \u{\textup{\textbf{u}}}
\def \T{\textup{T}}
\def \d{\textup{T},}
\def \entropy{\eta}
\def \entropyflux{\psi}
\def \R{\mathcal{R}}
\def \Veq{V_{\textup{eq}}}
\def \HAR{\textup{\textbf{H}}}
\def \d{\,\textup{d}}
\def \WAR{\textup{\textbf{W}}}
\def \vCorli{\mathcal{V}_C}
\def \Vtilde{\widetilde{\mathcal{V}}}
\def \omegaTilde{\widetilde{\omega}}
\def \hA{\textup{\textbf{h}}_A}
\def \hD{\textup{\textbf{h}}_D}
\def \HCorli{H_C}
\def \hCorli{\textup{\textbf{h}}}
\def \muCorli{\mu_C}
\def \v1{v^{(1)}}
\begin{document}
 \title{Relaxation and stability analysis of a  third-order multiclass traffic flow model}
 	

          \author{Stephan Gerster\thanks{Department of Mathematics ``Guido Castelnuovo'', Sapienza~University~of~Rome, (stephan.gerster@uniroma1.it).}
          \and Giuseppe Visconti\thanks{Department of Mathematics ``Guido Castelnuovo'', Sapienza~University~of~Rome, (giuseppe.visconti@uniroma1.it).}}

         \pagestyle{myheadings} \markboth{Third-order taffic flow model}{Stephan Gerster and Giuseppe Visconti} \maketitle

          \begin{abstract}
Traffic flow modeling spans a wide range of mathematical approaches, from microscopic descriptions of individual vehicle dynamics to macroscopic models based on aggregate quantities. A fundamental challenge in macroscopic modeling lies in the closure relations, particularly in the specification of a traffic hesitation function in second-order models like Aw-Rascle-Zhang.
In this work, we propose a  third-order hyperbolic traffic model in which the hesitation evolves as a driver-dependent dynamic quantity. 
Starting from a microscopic formulation, we relax the standard assumption by introducing an evolution law for the hesitation. This extension allows to incorporate hysteresis effects, modeling the fact that drivers respond differently when accelerating or decelerating, even under identical local traffic conditions. 
Furthermore, various relaxation terms are introduced. These allow us to establish relations to the Aw-Rascle-Zhang model and other traffic flow models.
          \end{abstract}
          
\begin{keywords}  
    Hyperbolic systems;  vehicular flow; traffic hesitation; macroscopic limit; relaxation 
\end{keywords}

\begin{AMS} 
	35L25; 76A30
\end{AMS}

\section{Introduction} \label{sec:intro}
The modeling of traffic flow has inspired a rich variety of mathematical approaches, each capturing different levels of detail and operating at different scales of representation. In general, these models can be classified as microscopic~\cite{Gong:2023,Piu:2022,Hayat:2023,Albeaik:2022}, mesoscopic~\cite{Marques:2013,Iannini:2016,Lu:2025}, or macroscopic~\cite{Piu:2023,Herty:2018,Song:2019}. Microscopic models describe the dynamics of individual vehicles, accounting for detailed interactions among drivers. Macroscopic models, on the other hand, treat traffic flow analogously to fluid dynamics, focusing on aggregate quantities such as vehicle density and mean velocity. Mesoscopic models lie between these two scales, describing the behavior of groups of vehicles while retaining some statistical characteristics of individual motion.

A key aspect in the development and understanding of traffic models is the ability to link different scales through suitable scaling limits, allowing for a consistent transition from microscopic dynamics to macroscopic behavior~\cite{Borsche:2018,Burger:2018,Cristiani:2016,Cardaliaguet:2021}. 
For instance, 
small variability of microscopic velocities at equilibrium leads to 
stop-and-go waves, which are macroscopically observed in traffic flow~\cite{Herty:2020,Goatin:2024}. In this work, we focus on the microscopic and macroscopic scales, which represent the finest and coarsest levels of description, respectively.

Microscopic models, while highly detailed, are less predictive on large scales and are often computationally expensive, making them impractical for real-time applications such as traffic monitoring and control. In contrast, macroscopic models offer a more tractable and efficient framework for such purposes but suffer from limitations related to closure relations, which must be specified to complete the system of equations.

First-order macroscopic models, most notably the Lighthill-Whitham-Richards (LWR) model~\cite{Lighthill:1955,Richards:1956}, assume homogeneous behavior of various drivers and do not account for different types of vehicles, such as a fast car or a slow truck. 
Furthermore, they rely on 
a prescribed relationship between vehicle density and velocity. These assumptions imply that the traffic flow is always in equilibrium, with instantaneous adjustment of speed to local conditions. 
These simplifications often fail to capture realistic driving behavior. 
Two approaches have emerged to address this problem. 

The first solution concept is to replace
the equilibrium velocity of a first-order model with a dynamic variable that is described by an additional equation. 
The second solution concept incorporates the behaviors of different types of vehicles and drivers using multiclass models. 
Both approaches have in common that they result in higher-order systems that account for the different classes of drivers and additional unknowns that describe changes in the dynamic velocity. 

One of the most widely noted contributions to the first research direction, namely, describing the dynamics of velocity, was provided by Aw and Rascle~\cite{Aw:2000}, and independently by Zhang~\cite{Zhang:2002}, who proposed a second-order macroscopic (ARZ) model. 
Inspired by gas dynamics, this model introduces an additional equation that accounts for vehicle acceleration and inertia. This model addresses the shortcomings of earlier second-order approaches~\cite{Daganzo:1995}, but still requires a closure relation in the form of traffic hesitation, which is typically modeled as a monotonically increasing function of the density. Various formulations for traffic hesitation have been proposed in the literature, and these macroscopic models may be derived either from physical principles~\cite{Berthelin:2008} or via asymptotic limits of microscopic models~\cite{Aw:2002}.

Indeed, while the LWR model can be obtained as the hydrodynamic limit of first-order microscopic models~\cite{Holden:2018},  the ARZ model arises from the limit of second-order microscopic models~\cite{Aw:2002}, such as the combined Follow-the-Leader and Bando models~\cite{Gazis:1961,Bando:1995}.
In the microscopic setting, the closure is embedded in the vehicle dynamics, First-order models prescribe a velocity that depends on the headway (the distance to the vehicle ahead), while second-order models incorporate interaction forces or accelerations that are typically proportional to a function of the local density. 
The central difficulty lies in the need for a closure relation that must be specified \textit{a priori}, and its choice significantly influences the qualitative behavior of the model. 

Approaches that address the second solution concept, namely the inclusion of behaviors of different types of vehicles and drivers, typically include one or more additional equations to describe features of additional classes~\cite{Benzoni2003,Bagnerini:2003}. Since these are characteristics of each vehicle, they do not depend on time in Lagrangian coordinates. 

In this work, we propose a novel third-order hyperbolic traffic model (TOM), which is inspired by both solution concepts. More precisely, hesitation is no longer prescribed as a fixed function of the density; instead, the hesitation evolves dynamically, introducing an additional equation. Our approach is based on the derivation of the ARZ model from microscopic Follow-the-Leader and Bando-type models, where a key step involves the introduction of a new variable that is a conserved quantity in Lagrangian coordinates, namely~$\omega=v+P(\rho)$, where $P(\rho)$ represents the traffic hesitation.  
Our approach seeks to make this choice more suited for real-world problems. Specifically, we relax this assumption by making the hesitation $P$ no longer constrained to be a static function of the density, and instead treat it as a dynamic variable with its own evolution in time.

Specifically, we allow hesitation to depend on hysteresis effects in traffic, reflecting the idea that drivers' responses to changes in headway or velocity are not instantaneous and may be influenced by past states. 
Hence, we introduce a driver- or car-dependent property similarly to the second solution concept. 

This allows us to derive a third-order microscopic model from which we obtain a corresponding macroscopic model in Lagrangian coordinates, and subsequently transform it into the Eulerian framework via a suitable change of variables.

The hysteresis model we consider is inspired by the formulation introduced by Corli and Fan~\cite{Corli:2019,Corli:2023}, which captures the memory-dependent nature of driver behavior, particularly in congested or stop-and-go traffic regimes. We argue that modeling hesitation through hysteresis provides a more accurate representation of real traffic dynamics. In particular, it accounts for the asymmetry in driver reactions.  Empirical observations suggest that drivers tend to accelerate more cautiously than they decelerate, even under identical traffic conditions -- a behavior that cannot be reproduced by hesitation laws depending solely on the current density. 
Then, we will introduce relaxation terms in the third-order model 
as in the inhomogeneous ARZ model, where  a relaxation term allows drivers to achieve the equilibrium speed~\cite{Greemberg2002}. 
In the small relaxation limit, the ARZ model approaches the LWR model, which can be obtained by means of a Chapman-Enskog-type expansion. 
Here, the stability and wellposedness of solutions to the inhomogeneous ARZ model are governed by the study of the sign of the diffusion coefficient. 
More precisely, the sub-characteristic condition~\cite{Jin1995}  holds if and only if the advection-diffusion equation is dissipative, which is obtained via a first-order Chapman-Enskog-type expansion, provided that the system is of size two~\cite[Th.~3.1]{Chen1994}. This statement, however, does~\emph{not} necessarily hold for hyperbolic relaxation problems of general size.

Indeed, relaxation and stability results are well-established for second-order systems, whereas results for higher-order models are still partial.
In fact, the local equilibrium approximation of a system of size three may not even be hyperbolic~\cite[Sec.~2]{Chen1994}. 
Whenever a loss of hyperbolicity does not arise in the local equilibrium approximation, it makes sense to seek a first-order correction. 
However, it is generally an open question whether the first-order correction is dissipative even if the characteristic speeds of the local equilibrium  are not excessive~\cite{Chen1994}. 

We contribute to this research gap by conducting a stability analysis for various relaxation terms. 
In particular, we examine the relaxation to the ARZ model in terms of a conservative and non-conservative form. 
We obtain, for the conservative form of our proposed model of size three, stabilization results that are similar to those of~\cite[Th.~3.1]{Chen1994}, which hold only for second-order systems. 
Namely, the first-order correction leads to a dissipative advection-diffusion equation if and only if the characteristic curves of the local equilibrium do not exceed those of the relaxed system. 
On the other hand, a relaxation with respect to a non-conservative form results in a system that is of Hamilton-Jacobi-type. Although a diffusion coefficient cannot be investigated in this case, we still obtain results in terms of the characteristic speeds. 
A similar problem arises when the system is relaxed to a family of velocity curves. Then, a Chapman-Enskog-type expansion leads to a non-symmetric diffusion matrix, where the equivalence~\cite[Th.~3.1]{Chen1994} also does not hold. In this case, we obtain a relation between the relaxed system and the zero-relaxation limit in the sense of vanishing second-order terms.\\

\noindent
This paper is structured as follows. 
We begin in Section~\ref{SectionMicro} with the motivation and derivation of the newly proposed traffic model at the microscopic level in terms of ordinary differential equations. 
In Section~\ref{SectionMacroscopicPiu}
the hydrodynamic limit is stated in terms of a macroscopic hyperbolic system in Lagrangian coordinates. The system is then transformed in Eulerian coordinates. Various representations of the resulting macroscopic model,  the wave structure and entropy-entropy flux pairs are stated. In particular, a Temple class system is obtained. Section~\ref{SectionRelaxation} investigates the relaxation towards the ARZ model and towards multiclass traffic flow models with parameterized families of velocity curves. Finally, 
the Riemann problem is discussed in~Section~\ref{SectionNumerics}.

\newpage
\section{Microscopic description of traffic flow}\label{SectionMicro}
Most microscopic traffic flow models are based on a \textit{car-following} ansatz, where the evolution of the position~$x_i(t)$ and the velocity~$v_i(t)$ of a car~$i$ at time~${t\geq 0}$ is described by ordinary differential equations~(ODEs) and depends only on the $i+1$ vehicle ahead. 

\subsection{Follow-the-Leader models}\label{SecARZmicro}
A microscopic description of the car dynamics~\cite{Aw:2002} can be given by the following system of ODEs:
\begin{equation}\label{FtL}
	\begin{cases}
		\begin{aligned}
			\dot{x}_i&=v_i, \\
			\dot{v}_i&=c_\gamma \frac{\dot{x}_{i+1}-\dot{x}_{i}}{(x_{i+1}-x_i)^{\gamma+1}},
		\end{aligned}
	\end{cases}
\end{equation}
where~$\dot{x}(t) = x'(t)$ denotes the time derivative. 
The local density around vehicle~$i$, which is a dimensionless quantity, and its inverse, the local normalized specific volume, are defined by
$$
\rho_i\coloneqq
\frac{\Delta X}{x_{i+1}-x_i}
\quad\text{and}\quad
\tau_i\coloneqq \frac{1}{\rho_i},
$$
where~$\Delta X$ is the length of a car.
The choices~$c_\gamma\coloneqq\gamma (\Delta  X)^\gamma$ and $P(\rho)\coloneqq\rho^\gamma$ lead to the relation
$$
-\dot{v}_i
=
-
c_\gamma \frac{\dot{x}_{i+1}-\dot{x}_{i}}{(x_{i+1}-x_i)^{\gamma+1}}
=
c_\gamma
\frac{1}{(x_{i+1}-x_i)^{\gamma-1}}
\bigg(
-
\frac{\dot{x}_{i+1}-\dot{x}_i}{(x_{i+1}-x_i)^2}
\bigg)
=
P'(\rho_i)\dot{\rho}_i
=
\dot{P(\rho_i)},
$$
where $\dot{P(\rho_i)} \coloneqq
\frac{\textup{d}}{\textup{d}t}
P\big( \rho_i(t) \big)$
 denotes the composite time derivative. Hence, the relation~$\dot{\omega}_i=0$ holds for~$\omega_i\coloneqq v_i+P(\rho_i)$. 
Since the quantity~$\omega_i(t)=\omega_i(0)$ is constant over time, it is a driver-dependent quantity that corresponds to car~$i$. 

\subsection{Multiclass models}\label{SecMicroMulticlass}
The idea of multiclass models~\cite{Bagnerini:2003} is to associate to each driver  an additional~information~$h_i$,  which accounts for different driving characteristics. For instance, a car accelerates faster than a truck. 
Then, the Follow-the-Leader system~\eqref{FtL} is augmented with an additional equation and reads as
\begin{equation}\label{MulticlassMicroscopic}
	\begin{cases}
		\begin{aligned}
			\dot{x}_i&=v_i, \\
			\dot{\omega}_i&=0, \\
			\dot{h}_i &= 0
		\end{aligned}
	\end{cases}
	\text{with}
	\quad \
	\omega_i\coloneqq
	v_i+q(\rho_i,h_i).
\end{equation}
Since the driver-dependent information~$h_i(t)=h_i(0)$ is constant over time, it can be seen as a known quantity that acts similarly to a parameter, which parametrizes the term~$q(\cdot,h_i)$.

\subsection{The third-order model (TOM)}\label{SecMicroPiu}
We propose a  model that is quite related to a multiclass model. It is based on the ansatz
\begin{equation}\label{PiuMicroscopic}
	\begin{cases}
		\begin{aligned}
			\dot{x}_i&=v_i, \\
			\dot{\omega}_i&=0, \\
			\dot{h_i} &= \dot{\h(\rho_i)}
		\end{aligned}
	\end{cases}
	\text{with}
	\qquad
	\omega_i\coloneqq
	v_i+q(h_i).
\end{equation}
It is motivated by the following point of criticism regarding the choices~$\omega_i=
v_i+P(\rho_i)$ and~$\omega_i=
v_i+q(\rho_i,h_i)$ in Subsections~\ref{SecARZmicro} and~\ref{SecMicroMulticlass}. Namely, 
closure relations $\rho_i \mapsto P(\rho_i)$ and $\rho_i \mapsto q(\rho_i,\cdot)$ are required. For instance, the relation~$P(\rho)=\rho^\gamma$ in Subsection~\ref{SecARZmicro}, which is needed to obtain the expression~$\dot{\omega}_i=0$, is relatively restrictive, since the Follow-the-Leader term could also have a more general formulation~\cite{Herty:2020}. 
By rewriting the second equation of the proposed model~\eqref{PiuMicroscopic} in terms of velocity, i.e.
$$
\dot{v}_i= -
\dot{q(h_i)}=
-q'(h_i)
\dot{h_i}
=
-q'(h_i)
\dot{\h(\rho_i)}
=
-q'(h_i)
\h'(\rho_i)
\dot{\rho_i}
=-
q'(h_i)
\h'(\rho_i)
\rho_i
\frac{v_{i+1}-v_i}{x_{i+1}-x_i},
$$
we observe that there are no restrictive assumptions on the functions~$q$ and $\h$ required, which must be specified for the  proposed ansatz in~\eqref{PiuMicroscopic}, with the exception of the non-restrictive assumption~${q'(h_i)\h'(\rho_i)>0}$. 
Namely, the term~$q'(h_i)\h'(\rho_i)$ generalizes the derivative~$P'(\rho_i)>0$ in the follow-the-leader models in Section~\ref{SecARZmicro}. 
Therefore, the requirement of a closure does not occur for the choice~$\omega_i=
v_i+q(h_i)$, which we propose. However, the resulting ODE~$\big\{ \dot{x}_i=v_i,\,\dot{\omega}_i =0 \big\}$ must be closed by an additional equation, namely~$ \dot{h}_i = \dot{\h}(\rho_i)$. For this reason, the model~\eqref{PiuMicroscopic} arises rather from a modeling step of the dynamics than from a closure mapping or parameterization as in previous models. In particular, a reasonable choice for~$\h(\rho)$ is given in~\cite{Corli:2019,Corli:2023}, where~$h$ accounts for the hesitation of drivers.

\section{The macroscopic TOM model}
\label{SectionMacroscopicPiu}
The macroscopic description of the microscopic Follow-the-Leader and multiclass models in Section~\ref{SecARZmicro} and~\ref{SecMicroMulticlass} has been ensured by a hydrodynamic limit~\cite{Aw:2002,Bagnerini:2003}. The same arguments yield the corresponding formulation of the proposed microscopic model \eqref{PiuMicroscopic} on a macroscopic level, when the number of cars tend to infinity and their size~$\Delta X\rightarrow 0$ shrinks to zero. It reads in Lagrangian coordinates as
\begin{equation}\label{PiuLagrange}
	\begin{cases}
		\begin{aligned}
			\partial_T \tau(T,X) - \partial_X v(T,X) \ \ &=0,\\
			\partial_T \Big(v(T,X)+ q \big(h(T,X)\big)\Big) &=0,\\
			\partial_T \Big(h(T,X)-\h\big(\rho(T,X)\big)\Big) &=0
		\end{aligned}
	\end{cases}
	\text{for}\quad
	\tau\coloneqq
	\frac{1}{\rho}.
\end{equation}	
Here, the variables $(T,X)$ denote the Lagrangian time and space coordinates. Those are related to the Eulerian $(t,x)$ by~$X(t,x) = \int^x \rho(t,\xi) \d \xi$, which implies 
the transforms
\begin{equation}\label{TransformsLagrange}
	\partial_t X(t,x)=-\rho v,\ \ 
	\partial_x X(t,x)=\rho,\ \ 
	\partial_T x(T,X)=v,\ \
	\partial_X x(T,X)=\frac{1}{\rho}
	\ \ \text{and}\ \ 
	T=t.
\end{equation}
Note that the variable~$X$ is not a mass, but
describes the total length occupied by cars up to the point in space~$x$, since the local density $\rho$ is a dimensionless quantity. 
Likewise, the driver-dependent quantity~$h$ is dimensionless, while the function~$q$ has the physical dimension of a velocity.\\

\begin{remark}
   	Using initial values~$
	h-\h(\rho)=h_0-\h(\rho_0)
	$ 
	and~$
	v+q(h)=v_0+q(h_0)
	$, system~\eqref{PiuLagrange} reduces in Lagrangian coordinates to the scalar equation
	\begin{align*}\label{PiuScalar}
		&	\partial_T \tau(t,X)-\partial_X v\big(\tau(t,X);X\big)
		=0 \\
		&\begin{aligned}
			&	\text{with}
			&	v\big(\tau(t,X);X\big)
			&\coloneqq
			v_0(X)+q\big(h_0(X)\big)-q\Big(
			h_0(X)-\h\big(\rho_0(X)\big)+\h\big(\tau(t,X)^{-1}\big)
			\Big) \\
			&
			\text{and}
			&	v'\big(\tau(t,X);X\big)
			&\; =q'\Big(
			h_0(X)-\h\big(\rho_0(X)\big)+\h\big(\tau(t,X)^{-1}\big)\Big)
			\h'\big(\tau(t,X)\big)
			\tau(t,X)^{-2}.
		\end{aligned}
	\end{align*}
\end{remark}

\medskip
\noindent
Applying\footnote{The calculations follow the lines of~\cite{Aw:2002,Bagnerini:2003} and are included in the appendix for the sake of the reader.} the transforms~\eqref{TransformsLagrange},   
the system~\eqref{PiuLagrange} in Eulerian coordinates reads as
\begin{equation}\label{PiuS1}\tag{$\mathcal{S}1$}
	\begin{cases}
		\begin{aligned}
			\partial_t \rho + \partial_x(\rho v)&=0,\\
			\partial_t v+ \Big(v-q'(h)\h'(\rho)\rho\Big)\partial_x v
			&=0, \\
			\partial_t h + v \partial_xh&= 
			\partial_t \h(\rho) + v \partial_x \h(\rho). 
		\end{aligned}
	\end{cases} 
\end{equation}
Due to the first equation, we have~$
\partial_t \h(\rho) 
+
v \partial_x \h(\rho)
= -\h'(\rho)\rho \partial_x v
$. Hence, the system~\eqref{PiuS1} can be written in the quasilinear form 
$$
\partial_t \u + Q(\u) \partial_x \u=0 
\quad\text{with}\quad 
Q(\u)
=
\begin{pmatrix}
	v & \rho & 0 \\
	0 & v-q'(h)\h'(\rho)\rho & 0     \\
	0 & \h'(\rho) \rho & v
\end{pmatrix}
\quad\text{and}\quad 
\u=
\begin{pmatrix}
    \rho \\ v \\ h
\end{pmatrix}.
$$
The eigenvalue decomposition
$L(\u) Q(\u) = R(\u) D(\u) 
$ 
reads as
\begin{align*}
	R(\u)&=
	\begin{pmatrix}
		1 & 0 & \rho \\
		0 & 0 & -q'(h)\h'(\rho)\rho   \\
		0 & 1 & \h'(\rho) \rho
	\end{pmatrix}, \\
	L(\u)&=
	\big[q'(h)\h'(\rho)\rho\big]^{-1}
	\begin{pmatrix}
		q'(h)\h'(\rho)\rho & \rho & 0 \\
		0 & \h'(\rho) \rho & q'(h)\h'(\rho)\rho \\
		0 & -1 & 0
	\end{pmatrix}
\end{align*}
with  eigenvalues 
$D(\u) = \diag\big\{ v,v,v-q'(h)\h'(\rho)\rho \big\}$ 
that are labeled as 
$$\lambda_1(\rho,v,h)=v-q'(h)\h'(\rho)\rho
\quad\text{and}\quad \lambda_2(\rho,v,h) = \lambda_3(\rho,v,h) = \lambda_v(v) \coloneqq v.$$
We show in Lemma~\ref{LemmaEquivalencePiu} that the system~\eqref{PiuS1} is equivalent to the system
\begin{equation}\label{PiuS2}\tag{$\mathcal{S}2$}
	\begin{cases}
		\begin{aligned}
			\partial_t \rho + \partial_x(\rho v)&=0,\\
			\partial_t \Big(v+q(h) \Big)
			+v
			\partial_x \Big(v+q(h) \Big)
			&=0, \\
			\partial_t \Big(h-\h(\rho) \Big) + v \partial_x
			\Big(h-\h(\rho) \Big)&= 
			0	\end{aligned}
	\end{cases}  
\end{equation}
and by defining the quantities
$$
z=\rho\big(
v+q(h)
\big)
\quad
\text{and}
\quad
w=\rho\big(h-\h(\rho)\big)
\quad
\text{with}
\quad
v(\rho,z,w)
=
\frac{z}{\rho}-q(h)
\quad
\text{for}
\quad
h=\frac{w}{\rho}
+
\h(\rho)
$$
it has also the conservative form
\begin{equation}\label{PiuS3}\tag{$\mathcal{S}3$}
	\begin{cases}
		\begin{aligned}
			\partial_t \rho + \partial_x\Big(\rho v(\rho,z,w)\Big)&=0,\\
			\partial_tz
			+
			\partial_x \Big(
			\frac{z^2}{\rho}
			-zq(h)
			\Big)
			&=0, \\
			\partial_t w +
			\partial_x \Big(wv(\rho,z,w) 
			\Big)&= 
			0.	\end{aligned}
	\end{cases}  
\end{equation}

\begin{lemma}\label{LemmaEquivalencePiu}
	The formulations~\eqref{PiuS1}~--~\eqref{PiuS3} are equivalent for smooth solutions.	
\end{lemma}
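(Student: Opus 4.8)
The plan is to prove the two equivalences \eqref{PiuS1} $\Leftrightarrow$ \eqref{PiuS2} and \eqref{PiuS2} $\Leftrightarrow$ \eqref{PiuS3} separately. All manipulations will be purely algebraic applications of the product and chain rules, which is exactly why the statement is restricted to smooth solutions: the conservative form \eqref{PiuS3} and the non-conservative forms carry different Rankine--Hugoniot conditions and thus do not share the same weak solutions. The observation used throughout is that the continuity equation $\partial_t\rho+\partial_x(\rho v)=0$ is the first equation of all three formulations and may therefore be assumed when reconciling the remaining two equations.

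For \eqref{PiuS1} $\Leftrightarrow$ \eqref{PiuS2}, I note first that the third equation of \eqref{PiuS1}, after writing $\partial_t\h(\rho)=\h'(\rho)\partial_t\rho$ and $\partial_x\h(\rho)=\h'(\rho)\partial_x\rho$, is literally the third equation of \eqref{PiuS2}. It then remains to reconcile the second equations. Expanding the second equation of \eqref{PiuS2} by the product and chain rules gives $\partial_t v+v\partial_x v+q'(h)\big(\partial_t h+v\partial_x h\big)=0$; substituting the (common) third equation to replace $\partial_t h+v\partial_x h$ by $\h'(\rho)\big(\partial_t\rho+v\partial_x\rho\big)$, and then using the continuity equation in the form $\partial_t\rho+v\partial_x\rho=-\rho\,\partial_x v$, reduces this to $\partial_t v+\big(v-q'(h)\h'(\rho)\rho\big)\partial_x v=0$, which is the second equation of \eqref{PiuS1}. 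Since every step is reversible, the two second equations are equivalent given the first and third, so \eqref{PiuS1} and \eqref{PiuS2} are equivalent.

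For \eqref{PiuS2} $\Leftrightarrow$ \eqref{PiuS3}, I will use the change of variables $z=\rho(v+q(h))$, $w=\rho(h-\h(\rho))$, which on $\{\rho>0\}$ is a smooth bijection with smooth inverse $h=w/\rho+\h(\rho)$, $v=z/\rho-q(h)$. A direct substitution shows $\rho\,v(\rho,z,w)=z-\rho q(h)=\rho v$, $z^2/\rho-zq(h)=zv$ and $w\,v(\rho,z,w)=wv$, so \eqref{PiuS3} is the system consisting of the continuity equation together with $\partial_t\big(\rho(v+q(h))\big)+\partial_x\big(\rho(v+q(h))v\big)=0$ and $\partial_t\big(\rho(h-\h(\rho))\big)+\partial_x\big(\rho(h-\h(\rho))v\big)=0$. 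For any smooth scalar $a$ one has the identity $\partial_t(\rho a)+\partial_x(\rho a v)=a\big(\partial_t\rho+\partial_x(\rho v)\big)+\rho\big(\partial_t a+v\partial_x a\big)$; applying it with $a=v+q(h)$ and with $a=h-\h(\rho)$ and invoking the continuity equation, these two conservation laws become $\rho\big(\partial_t(v+q(h))+v\partial_x(v+q(h))\big)=0$ and $\rho\big(\partial_t(h-\h(\rho))+v\partial_x(h-\h(\rho))\big)=0$, which on $\{\rho>0\}$ are exactly the second and third equations of \eqref{PiuS2}.

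I do not expect a genuine obstacle: the proof is bookkeeping with the chain rule. The only points requiring care are the restriction to smooth solutions (so that the conservative and non-conservative forms genuinely coincide and all derivatives exist), the invertibility of the map $(\rho,v,h)\mapsto(\rho,z,w)$, and the division by $\rho$ when passing from $\rho\big(\partial_t a+v\partial_x a\big)=0$ to $\partial_t a+v\partial_x a=0$, which is legitimate on the physically relevant region $\rho>0$.
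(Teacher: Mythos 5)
Your proof is correct and follows essentially the same route as the paper's: both pass between the transport and conservative forms by combining the continuity equation with product-rule identities, and both reconcile the second equations of \eqref{PiuS1} and \eqref{PiuS2} by substituting the third equation together with $\partial_t\rho+v\partial_x\rho=-\rho\,\partial_x v$. Your version merely makes explicit the reversibility of the steps, the invertibility of $(\rho,v,h)\mapsto(\rho,z,w)$, and the division by $\rho>0$, which the paper leaves implicit.
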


\medskip
\begin{proof}
	The third equation of  system~\eqref{PiuS1} is equivalent to
    $$
	\partial_t 
	\big(h-\h(\rho) \big)
	+
	v
	\partial_x
	\big(h-\h(\rho) \big)=0.$$ 
	Then, the first conservation law implies
	\begin{equation*}
		\begin{aligned}
			0&=
			\Big(
			\partial_t \rho + \partial_x(\rho v)
			\Big)
			\Big( h-\h(\rho)  \Big)
			+
			\rho 
			\partial_t
			\Big( h-\h(\rho) \Big) 
			+
			\rho v
			\partial_x
			\Big( h-\h(\rho) \Big) \\
			&=
			\partial_t 
			\Big( \rho\big( h-\h(\rho) \big) \Big)
			+
			\partial_x
			\Big( \rho v\big( h-\h(\rho) \big) \Big).
		\end{aligned}
	\end{equation*}
	Due to the third equation, i.e.~$
	\partial_t h
	=
	-v\partial_x h - \h'(\rho)\rho \partial_x v$, we obtain for the second equation the relation
	\begin{align*}
		\partial_t \Big(
		v+q(h)
		\Big)
		+
		v
		\partial_x \Big(
		v+q(h)
		\Big) 
		&=
		\partial_t v + q'(h) \Big[
		-v\partial_x h - \h'(\rho)\rho \partial_x v
		\Big]
		+
		v\partial_x v + vq'(h)\partial_x h\\
		&=
		\partial_t v
+		\Big(
		v-q'(h)\h'(\rho)\rho
		\Big) \partial_x v.
	\end{align*}

    \noindent
    Multiplying the first equation by~$v+q(h)$ and adding it to the second one yields
	\begin{align*}
		0&=\partial_t \Big(
		v+q(h)
		\Big)
		+
		v
		\partial_x \Big(
		v+q(h)
		\Big) 
		+
		\Big(
		v+q(h)
		\Big)
		\Big(
		\partial_t \rho+\partial_x (\rho v)
		\Big)\\
		&=
		\partial_t \Big(
		\rho\big(v+q(h)
		\Big)
		+
		\partial_x
		\Big(
		\rho v\big(v+q(h)
		\Big).
	\end{align*}
\hfill
\end{proof}

\noindent
Lemma~\ref{LemmaEntropy} states entropy-entropy flux pairs.
\medskip

\begin{lemma}\label{LemmaEntropy}
	The systems~\eqref{PiuS1}~--~\eqref{PiuS3} are endowed with the entropy-entropy flux pairs
	$$
	\entropy(\rho,v,h)
	=
	\rho F(\omega)
	\quad
	\text{and}
	\quad
	\entropyflux(\rho,v,h)
	=v \rho F(\omega)
	\quad\text{for}\quad
	\omega=v+q(h),
	$$
	where~$F$ is any convex function.
\end{lemma}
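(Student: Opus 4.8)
The plan is to verify the entropy inequality directly in the quasilinear form~\eqref{PiuS1}, exploiting that~$\omega = v + q(h)$ is transported with speed~$v$ along the flow. First I would recall from Lemma~\ref{LemmaEquivalencePiu} that it suffices to work with any one of the equivalent formulations for smooth solutions, and~\eqref{PiuS1} is the most convenient since the velocity~$v$ already appears as an explicit unknown. The key observation is that the second equation of~\eqref{PiuS2} reads~$\partial_t \omega + v\,\partial_x \omega = 0$, i.e.\ the Riemann invariant~$\omega$ is simply advected at speed~$\lambda_v = v$.

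Next I would compute~$\partial_t \eta + \partial_x \psi$ for~$\eta = \rho F(\omega)$ and~$\psi = v\rho F(\omega)$. Expanding,
\begin{align*}
	\partial_t\bigl(\rho F(\omega)\bigr) + \partial_x\bigl(v\rho F(\omega)\bigr)
	&= F(\omega)\bigl(\partial_t\rho + \partial_x(\rho v)\bigr)
	+ \rho F'(\omega)\bigl(\partial_t\omega + v\,\partial_x\omega\bigr).
\end{align*}
The first bracket vanishes by the conservation of mass (first equation of~\eqref{PiuS1}), and the second bracket vanishes by the transport equation for~$\omega$ derived above. Hence~$\partial_t\eta + \partial_x\psi = 0$ identically for smooth solutions, so in fact~$(\eta,\psi)$ is an entropy--entropy flux pair for \emph{every} smooth~$F$, and convexity of~$F$ is exactly the condition needed for~$\eta$ to be a convex function of the conserved variables, giving the admissibility criterion for weak solutions.

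The only point requiring a little care, and the main (mild) obstacle, is to confirm that~$\eta = \rho F(v + q(h))$ is genuinely convex as a function of the conservative unknowns~$(\rho, z, w)$ of~\eqref{PiuS3}, not merely that the balance law holds; here one substitutes~$v = z/\rho - q(h)$ with~$h = w/\rho + \h(\rho)$, so that~$\omega = v + q(h) = z/\rho$, and therefore~$\eta = \rho F(z/\rho)$. This is the perspective function of~$F$, which is well known to be jointly convex in~$(\rho,z)$ whenever~$F$ is convex, and it does not depend on~$w$ at all, so the Hessian in~$(\rho,z,w)$ is positive semidefinite. I would close by remarking that the computation is coordinate-robust: one may equally perform it in~\eqref{PiuS2} or~\eqref{PiuS3} using the chain rule together with the already-established equivalence, and in every case the two structural identities—mass conservation and transport of~$\omega$ at speed~$v$—are what make the entropy balance collapse to an equality.
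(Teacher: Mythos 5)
Your argument is correct and takes essentially the same route as the paper: expand $\partial_t\bigl(\rho F(\omega)\bigr)+\partial_x\bigl(v\rho F(\omega)\bigr)$ into the two brackets and note that they vanish by mass conservation and by the transport relation $\partial_t\omega+v\,\partial_x\omega=0$ coming from~\eqref{PiuS2}. Your extra observation that in the conserved variables $\eta=\rho F(z/\rho)$ is the perspective function of $F$, hence jointly convex in $(\rho,z)$ and independent of $w$, goes slightly beyond the paper's proof (which only checks the balance law) and correctly justifies the role of the convexity assumption.
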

\medskip

\begin{proof}
	Due to the formulation~\eqref{PiuS2}, we have~$
	\partial_t \omega + v\partial_x \omega
	=
	0
	$. Hence, the entropy equation reads as
	\begin{equation*}
	\partial_t \entropy(\rho,v,h)
	+
	\partial_x \entropyflux(\rho,v,h)
	=
	\Big(
	\partial_t\rho
	+
	\partial_x(\rho v)
	\Big)
	F(\omega)
	+
	F'(\omega)\rho
	\Big(
	\partial_t \omega
	+
	v
	\partial_x \omega
	\Big)
	=0.
	\end{equation*}
    \hfill
\end{proof}

\noindent
Although two eigenvalues coincide, the Jacobian of the systems~\eqref{PiuS1}~--~\eqref{PiuS3} have a full set of eigenvectors and hence form a strongly hyperbolic system. 
In fact, systems that are endowed with an entropy-entropy flux pair can be symmetrized and hence are strongly hyperbolic~\cite[Th.~3.1]{Godlewski1998}.

Furthermore, we recall that a characteristic field, corresponding to an eigenvalue $\lambda_k$, is  genuinely nonlinear if the function~$\nabla_{\u}\lambda_k(\u)\cdot r_k(\u)$  never vanishes
and it is linearly degenerate if the equation~$\nabla_{\u}\lambda_k(\u)\cdot r_k(\u)=0$ holds for all states~$\u$. 
We observe the expressions~$
\nabla_{\u}\lambda_2(\u)\cdot r_2(\u)=0$, 
$\nabla_{\u}\lambda_3(\u)\cdot r_3(\u)=0
$ 
and the strict equality
$$
\nabla_{\u}\lambda_1(\u)\cdot r_1(\u)
=
-\rho q'(h)\big(\h'(\rho)+\rho \h''(\rho)\big)
-
q'(h)\h'(\rho)\rho
-q''(h)\h'(\rho)^2\rho^2<0
$$	
provided that the properties
$$\h'(\rho)>0,\quad q'(h)>0,\quad 
\h''(\rho)\geq 0,\quad q''(h)\geq 0$$
hold, which is  slightly more restrictive than the assumption~$
q'(h_i)h'(h_i)>0$
in Section~\ref{SecMicroPiu}, where the microscopic case is considered. 
Hence, the fields $k=2,3$ that correspond to the double eigenvalue~$\lambda_v(v)=v$, which travel faster then the first field, i.e.~$\lambda_1(\rho,v,h)<\lambda_v(v)$, 
form contacts.
Riemann invariants~$\R_k$ satisfy~$\nabla_\u\R_k(\u)\cdot\lambda_k =0$ and read as~$\R_1(\u)=\omega =v+q(h)$ 
and~$
\R_2(\u)=v$, 
$\R_3(\u)=v$. 

Therefore, the systems~\eqref{PiuS1}~--~\eqref{PiuS3} form  a Temple class system~\cite{Temple1982} such as the Aw-Rascle-Zhang traffic flow model~\cite[Sec.~2]{Aw:2000}. 
This is a desirable property for traffic flow models, since  oscillation cannot appear for such systems. They can  only be propagated~along contact discontinuities~\cite{DiPerna1983,Rascle1991}. 
More precisely, 
if a field is genuinely nonlinear, oscillations cannot remain for any positive time.

This means that  shock waves are stable and propagate in the correct direction, namely~backward relative to traffic flow, as observed in reality. 
Furthermore, the total entropy of the system is conserved, since the entropy inequality~$\partial_t \entropy + \partial_x \entropyflux =0$ is in fact an equality. In contrast to gas dynamics where information is lost over a shock wave, which yields mathematically an entropy inequality, the total entropy in traffic flow models is expected to be constant. 
For instance, a strict entropy inequality would mean that information about the driver dependent quantity~$h$ is lost across a shock~\cite{TechnicalReportEntropy,Huang2024AnEM}.

We note that oscillations are sometimes of interest, since they model stop-and-go waves. However, those arise from instabilities of small perturbations around steady states and not from a shock wave.

\section{Macroscopic traffic flow models with relaxation}
\label{SectionRelaxation} 
Typical macroscopic traffic flow models describe the density~$\rho$
and the mean velocity $v$ of vehicles. 
The natural assumption that the total mass is conserved leads to impose that the density $\rho$ satisfies the continuity equation
\begin{equation}\label{eq:LWR}
	\partial_t \rho + \partial_x(\rho v)=0
	\quad\text{with initial values} \quad \rho(0,x)=\rho_0(x).
\end{equation}
In first-order models, e.g.~the LWR model, the velocity $v=v(\rho)$ is given as a function of the density alone. 

\subsection{Relaxation in the Aw-Rascle-Zhang model}\label{sec:ARZ} 
Second-order models describe the velocity by an additional differential equation. In particular, we consider the inhomogeneous Aw-Rascle-Zhang model with relaxation
\begin{equation}\label{eq:arz_non_cons}
	\begin{cases}
		\begin{aligned}
			\partial_t \rho + \partial_x(\rho v)&=0,\\
			\partial_t \Big(v+P(\rho)\Big) + v \partial_x\Big(v+P(\rho)\Big)
			&=
			\frac{1}{\varepsilon}
			\Big(\Veq(\rho)-v\Big),  
		\end{aligned}
	\end{cases}
\end{equation}
which can be derived from the microscopic ODEs~\eqref{FtL} with relaxation
\begin{equation}\label{FtLBando}
	\begin{cases}
		\begin{aligned}
			\dot{x}_i&=v_i, \\
			\dot{v}_i&=c_\gamma \frac{\dot{x}_{i+1}-\dot{x}_{i}}{(x_{i+1}-x_i)^{\gamma+1}} + \frac{1}{\varepsilon}\Big( V_\text{eq}(\rho_i)-v_i\Big),
		\end{aligned}
	\end{cases}
\end{equation}
by scaling arguments or by considering the microscopic case as a numerical discretization~\cite{Aw:2002}. 
Here,  $P\; :\; \mathbb{R}^+\to\mathbb{R}^+$ is called  hesitation or traffic pressure \cite{fan2013comparative}. It is a smooth, strictly increasing  function of the density, whose form is determined by the Follow-the-Leader term, as showed in Section~\ref{SecARZmicro}. 
In order to establish a relation to the novel third-order model, we extend the commonly used notation~\eqref{eq:arz_non_cons} to~$P(\rho) = 
q\big(\HAR(\rho)\big)
$.

The relaxation term with parameter~${\varepsilon>0}$ on the right-hand side of equation~\eqref{eq:arz_non_cons} makes the drivers tend to a given equilibrium velocity~$\Veq(\rho)$. 
This is important, since the  homogeneous ARZ model without relaxation has no mechanism to
move drivers when they are initially  at rest. 
By introducing the variable $z=\rho\left(v+q\big(\HAR(\rho)\big)\right)$, 
the system~\eqref{eq:arz_non_cons} can be written in  conservative form as
\begin{equation}\label{eq:inhomo}
	\begin{cases}
		\begin{aligned}
			\partial_t \rho + \partial_x\Big(z-\rho q\big(\HAR(\rho)\big) \Big)&=0,\\
			\partial_t z + \partial_x\Big( \frac{z^2}{\rho}-z q\big(\HAR(\rho)\big) \Big)
			&=
			\frac{\rho}{\varepsilon}
			\Big(\Veq(\rho)-v(\rho,z)\Big)
		\end{aligned}
	\end{cases}
	\text{for}\ \ \
	v(\rho,z)=\frac{z}{\rho}-q\big(\HAR(\rho)\big). 
\end{equation}
Alternatively, the ARZ model can be written as
\begin{equation}\label{ARZ3}
	\begin{cases}
		\begin{aligned}
			\partial_t \rho + \partial_x(\rho v)&=0,\\
			\partial_t v+ \Big(v-q'\big(\HAR(\rho)\big)\HAR'(\rho)\rho\Big)\partial_x v
			&=\frac{1}{\varepsilon}
			\Big(\Veq(\rho)-v\Big).
		\end{aligned}
	\end{cases} 
\end{equation}
From the representation~\eqref{ARZ3} it becomes obvious that the characteristic speeds read as 
$$
\lambda_1^{\textup{ARZ}}(\rho,v)
=
v 
- 
q'\big(\HAR(\rho)\big)\HAR'(\rho)\rho
\quad\text{and}\quad
\lambda_2^{\textup{ARZ}}(\rho,v)
=
\lambda_v(v) \coloneqq
v.
$$
Hence, the ARZ model is strictly hyperbolic under the assumption $\rho>0$. 
The (local)  equilibrium velocity~$\Veq(\rho)$ satisfies in the limit~$\varepsilon\rightarrow 0$ the scalar conservation law
\begin{equation}\label{equilbriumEquation}
	\partial_t \rho
	+
	\partial_x 
	f_{\textup{eq}}(\rho)=0
	\quad\text{for}\quad
	f_{\textup{eq}}(\rho) = \rho \Veq(\rho)
	\quad\text{and}\quad
	f_{\textup{eq}}'(\rho) =  \Veq(\rho) + \rho \Veq'(\rho).
\end{equation}

\noindent
Stability requires that the full system propagates information faster than the local equilibrium~\cite{Jin1995}, i.e.~the~\textbf{sub-characteristic condition}
\begin{equation}\label{SC}\tag{\textup{SC}}
	\lambda_1^{\textup{ARZ}} \big( \rho,\Veq(\rho) \big) 
	\leq
	f_{\textup{eq}}'(\rho)
	\leq 
	\lambda_v \big( \Veq(\rho)  \big) 
	\quad\text{with}\quad \Veq'(\rho)<0
\end{equation}
is satisfied. 
The sub-characteristic condition holds if and only if the 
\textbf{first-order correction} 
that is obtained by the \textbf{Chapman-Enskog-expansion}
$$
v = \Veq(\rho) + \varepsilon v^{(1)}
+
\mathcal{O}\big(\varepsilon^2\big)
$$
leads to a dissipative advection-diffusion equation. For the deterministic ARZ model~\cite{Zhang:2002,Herty:2020}, this reads as
\begin{equation}\label{DI}\tag{\textup{DI}}
	\begin{aligned}
		&\partial_t \rho 
		+
		\partial_x f_{\textup{eq}}(\rho)
		=
		\varepsilon\partial_x
		\big(
		\mu(\rho)
		\partial_x \rho
		\big) \\
		&\text{with positive diffusion coefficient}\quad
		\mu(\rho)
		\coloneqq
		-\,
		\rho^2  \Veq'(\rho) \big( \Veq'(\rho) + h'(\rho) \big) \geq 0
	\end{aligned}
\end{equation}
provided that the inequality~$\Veq'(\rho) + h'(\rho) \geq 0$ holds. 
We emphasize that the equivalence of a sub-characteristic condition and a dissipative advection-diffusion equation, which has been established for general systems of size two~\cite[Th.~3.1]{Chen1994}, 
does~\emph{not} necessarily hold for hyperbolic relaxation problems of general size.

\subsection{Relaxation to the homogeneous Aw-Rascle-Zhang model}
Theorem~\ref{TheoremAwRascle1} states a similar statement for the TOM model in conservative form~\eqref{PiuS3}.\\

\begin{theorem}\label{TheoremAwRascle1}	
	Let an equilibrium~$
	\WAR(\rho)
	\coloneqq
	\rho
	\big(
	\HAR(\rho)
	-
	\h(\rho) \big)
	$
	be given and assume the function~$q$ is two-times differentiable in a small neighbourhood around~$\HAR(\rho)$. 
	Then, the first-order correction to the local equilibrium approximation of the relaxed model
	\begin{equation}\label{SystemRelaxAwRascle}
		\begin{cases}
			\begin{aligned}
				\partial_t \rho + \partial_x\Big(\rho v(\rho,z,w) \Big)&=0,\\
				\partial_t z+\partial_x 
				\Big(
				\frac{z^2}{\rho}
				-
				zq(h)
				\Big)
				&=0, \\
				\partial_t w +  \partial_x \Big(wv(\rho,z,w)
				\Big)
				&=
				-
				\frac{\WAR(\rho)-w}{\varepsilon}
			\end{aligned}
		\end{cases}
	\end{equation}	
	for 
	$\displaystyle v(\rho,z,w) = \frac{z}{\rho} - q(h)$ and 
	$\displaystyle w=\rho \big(
	h-\h(\rho)
	\big)$ 
	reads as
    $$
\begin{cases}
\begin{aligned}
\partial_t \rho + \partial_x \Big(
\rho v\big(\rho,z,\WAR(\rho)\big)
\Big)
&= 
	\varepsilon
	\partial_x
	\Big(\rho
\mu(\rho)
        \partial_x v\big(\rho,z,\WAR(\rho)\big)	\Big)
	+
	\mathcal{O}\big(
	\varepsilon^2
	\big), \\
	\partial_t z + 
	\partial_x
	\Big(
	\frac{z^2}{\rho}-z
	q\big(\HAR(\rho)\big)
	\Big) 
	&= 
	\varepsilon
	\partial_x \Big(z
\mu(\rho)
	\partial_x v\big(\rho,z,\WAR(\rho)\big)
	\Big)+
	\mathcal{O}\big(\varepsilon^2\big)
\end{aligned}
\end{cases}
$$
with~$\mu(\rho)\coloneqq	\rho	q'\big(\HAR(\rho)\big)
	\big(\h'(\rho)-\HAR'(\rho) \big)
$
	In particular, the diffusion coefficients~$\rho \mu(\rho)$ and~$z\mu(\rho)$ are positive provided that the inequality~$\HAR'(\rho)<\h'(\rho)$ holds.	
\end{theorem}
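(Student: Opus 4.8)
The approach I would take is a formal Chapman--Enskog (Hilbert-type) expansion about the local equilibrium of~\eqref{SystemRelaxAwRascle}, paralleling the ARZ computation that produces~\eqref{DI}, the only difference being that the slaved variable is now~$w$ rather than~$v$. First I would identify the zero-relaxation limit: letting~$\varepsilon\to 0$ in the third equation of~\eqref{SystemRelaxAwRascle} forces~$w=\WAR(\rho)$, hence~$h=\frac{w}{\rho}+\h(\rho)=\HAR(\rho)$ and~$v(\rho,z,\WAR(\rho))=\frac{z}{\rho}-q\big(\HAR(\rho)\big)$. Since then~$\frac{z^2}{\rho}-z q(h)=z\,v(\rho,z,\WAR(\rho))$ and~$\rho\,v(\rho,z,\WAR(\rho))=z-\rho q\big(\HAR(\rho)\big)$, the remaining two equations reduce to the homogeneous Aw--Rascle--Zhang model in the conservative variables~$(\rho,z)$, that is,~\eqref{eq:inhomo} without its relaxation term; these serve as the ``equilibrium dynamics'' used below to eliminate time derivatives.

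Next I would insert~$w=\WAR(\rho)+\varepsilon\, w^{(1)}+\mathcal{O}(\varepsilon^2)$ into the third equation of~\eqref{SystemRelaxAwRascle}. Matching the~$\mathcal{O}(1)$ terms yields
$$
w^{(1)}=\partial_t\WAR(\rho)+\partial_x\big(\WAR(\rho)\,v(\rho,z,\WAR(\rho))\big),
$$
in which~$\partial_t\WAR(\rho)=\WAR'(\rho)\,\partial_t\rho$ must be rewritten by means of the reduced continuity equation~$\partial_t\rho=-\partial_x\big(\rho\, v(\rho,z,\WAR(\rho))\big)$. After this substitution the terms proportional to~$v(\rho,z,\WAR(\rho))\,\partial_x\rho$ cancel, leaving~$w^{(1)}=\big(\WAR(\rho)-\rho\WAR'(\rho)\big)\,\partial_x v(\rho,z,\WAR(\rho))$. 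Plugging in~$\WAR(\rho)=\rho\big(\HAR(\rho)-\h(\rho)\big)$ gives the algebraic identity~$\WAR(\rho)-\rho\WAR'(\rho)=\rho^2\big(\h'(\rho)-\HAR'(\rho)\big)$, so that~$w^{(1)}=\rho^2\big(\h'(\rho)-\HAR'(\rho)\big)\,\partial_x v(\rho,z,\WAR(\rho))$.

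It then remains to propagate this correction into the first two equations of~\eqref{SystemRelaxAwRascle}. A Taylor expansion of~$q$ about~$\HAR(\rho)$ (here the assumed twice-differentiability of~$q$ serves to absorb the quadratic remainder into~$\mathcal{O}(\varepsilon^2)$) gives~$h=\HAR(\rho)+\varepsilon\,w^{(1)}/\rho+\mathcal{O}(\varepsilon^2)$ and hence
$$
v(\rho,z,w)=v(\rho,z,\WAR(\rho))-\varepsilon\,\frac{q'\big(\HAR(\rho)\big)}{\rho}\,w^{(1)}+\mathcal{O}(\varepsilon^2).
$$
Substituting this into the fluxes of the first two equations (after multiplication by~$\rho$, resp.~by~$z$) and using~$q'\big(\HAR(\rho)\big)\,w^{(1)}=\rho\,\mu(\rho)\,\partial_x v(\rho,z,\WAR(\rho))$ with~$\mu(\rho)\coloneqq\rho\, q'\big(\HAR(\rho)\big)\big(\h'(\rho)-\HAR'(\rho)\big)$ produces exactly the stated advection--diffusion system (at~$\mathcal{O}(\varepsilon)$ one may replace~$\partial_x v(\rho,z,w)$ by~$\partial_x v(\rho,z,\WAR(\rho))$, since they differ by~$\mathcal{O}(\varepsilon)$). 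Positivity is then immediate: with~$\rho>0$, $z>0$ and~$q'>0$, both~$\rho\mu(\rho)=\rho^2 q'\big(\HAR(\rho)\big)\big(\h'(\rho)-\HAR'(\rho)\big)$ and~$z\mu(\rho)$ are nonnegative precisely when~$\HAR'(\rho)\le\h'(\rho)$.

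The step I expect to be the main obstacle is the bookkeeping in the~$\mathcal{O}(1)$ balance that defines~$w^{(1)}$: one must use the \emph{reduced} (equilibrium) continuity equation, not the full system, to eliminate~$\partial_t\rho$, then verify the cancellation of the~$v(\rho,z,\WAR(\rho))\,\partial_x\rho$ terms, and finally apply the identity~$\WAR(\rho)-\rho\WAR'(\rho)=\rho^2\big(\h'(\rho)-\HAR'(\rho)\big)$ correctly; a slip in any of these changes the sign or the prefactor of~$\mu$. As in the ARZ case and in~\cite{Chen1994,Herty:2020}, the expansion is carried out at the formal level, so a fully rigorous control of the~$\mathcal{O}(\varepsilon^2)$ remainder is not part of the argument.
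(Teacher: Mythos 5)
Your proposal is correct and follows essentially the same route as the paper's proof: a Chapman--Enskog expansion $w=\WAR(\rho)+\varepsilon w^{(1)}+\mathcal{O}(\varepsilon^2)$, elimination of $\partial_t\rho$ through the leading-order continuity equation so that $w^{(1)}=\big(\WAR(\rho)-\rho\WAR'(\rho)\big)\partial_x v\big(\rho,z,\WAR(\rho)\big)=\rho^2\big(\h'(\rho)-\HAR'(\rho)\big)\partial_x v\big(\rho,z,\WAR(\rho)\big)$, a second-order Taylor expansion of $q$ about $\HAR(\rho)$, and substitution into the fluxes of the first two equations, exactly as in the paper (which phrases the same computation through the expansion of $h=w/\rho+\h(\rho)$). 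The sign bookkeeping, the identity for $\WAR-\rho\WAR'$, and the positivity argument using $\rho>0$, $z>0$, $q'>0$ all match the paper's proof.
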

\medskip

\begin{proof}
	We apply the Chapman-Enskog expansion~$w = \WAR(\rho) + \varepsilon w^{(1)} + \mathcal{O}\big(\varepsilon^2\big)
	$ to the third equation of system~\eqref{SystemRelaxAwRascle} which yields also an expansion in terms of hesitation
	\begin{equation}\label{hChapman}
		h=\frac{w}{\rho}+\h(\rho)
		=
		\frac{\WAR(\rho)}{\rho}+\h(\rho)
		+
		\mathcal{O}(\varepsilon)
		=
		\HAR(\rho)
		+
		\mathcal{O}(\varepsilon)
		\ \
		\text{with}
		\ \
		\HAR(\rho)
		=
		\frac{\WAR(\rho)}{\rho}
		+\h(\rho).
	\end{equation}
	Taylor's theorem states for some~$\xi\in\big(h,\HAR(\rho) \big)$ or~$\xi\in\big(\HAR(\rho), h\big)$, respectively, which implies the bound~$ (h-\xi)=\mathcal{O}(\varepsilon)
	$, 
	the second-order representation
	\begin{equation}\label{Taylor}
		\begin{aligned}
			q(h)
			&=
			q\big(\HAR(\rho)\big)
			+
			\big(
			h-\HAR(\rho)
			\big)
			\Big[
			q'\big(
			\HAR(\rho)
			\big)
			+
			q''(\xi) (h-\xi)
			\Big] \\
			&=
			q\big(\HAR(\rho)\big)
			+
			\big(
			h-\HAR(\rho)
			\big)
			q'\big(\HAR(\rho)\big)
			+
	\mathcal{O}\big(\varepsilon^2\big).
		\end{aligned}
	\end{equation}
	The representation~\eqref{Taylor} and the expansion of~\eqref{hChapman} imply for the velocity the first-order representation
	$$
	v(\rho,z,w)
	=
	\frac{z}{\rho}
	-
	q(h)
	=
	\frac{z}{\rho}
	-
	q\big(
	\HAR(\rho)
	\big)
	+
	\mathcal{O}(\varepsilon)
	=
	v\big(
	\rho,z,\WAR(\rho)
	\big)
	+
	\mathcal{O}(\varepsilon).
	$$
	
	
	%
	%
	%

	\noindent
	Hence, the third equation satisfies
	\begin{alignat*}{8}
		&w^{(1)}
		&&=\partial_t \WAR(\rho) +  \partial_x \Big(\WAR(\rho) v\big(\rho,z,\WAR(\rho)\big)
		\Big)
		&&+\mathcal{O}(\varepsilon) \\
		& &&=
		-\WAR'(\rho)\partial_x\Big(\rho v\big(\rho,z,\WAR(\rho)\big) \Big) \\
	&	&&\quad +
		\WAR'(\rho)  v\big(\rho,z,\WAR(\rho)\big)\partial_x \rho + \WAR(\rho) \partial_x v\big(\rho,z,\WAR(\rho)\big)
		&&+
		\mathcal{O}(\varepsilon) \\
		& &&=-\WAR'(\rho)\rho \partial_x v\big(\rho,z,\WAR(\rho)\big)
		\; +
		\WAR(\rho) \partial_x v\big(\rho,z,\WAR(\rho)\big)
		&&+
		\mathcal{O}(\varepsilon)
	\end{alignat*}
	which leads to  the second-order expansions
	\begin{alignat*}{8}
&	w&&=
\	\WAR(\rho)
&&
&&	-
	\varepsilon
	\big(
	\WAR'(\rho)\rho-\WAR(\rho)
	\big)
	&&\partial_x v\big(\rho,z,\WAR(\rho)\big)
	&&+
\mathcal{O}\big(\varepsilon^2\big), \\
		& h&&= \ \
		\frac{w}{\rho}&&+\h(\rho)\\
& &&    		=
		\frac{\WAR(\rho)}{\rho}
        &&+\h(\rho)
	&&	-
		\varepsilon\bigg(
		\WAR'(\rho)-\frac{\WAR(\rho)}{\rho}
		\bigg)
        		&&\partial_x v\big(\rho,z,\WAR(\rho)\big)
		&&+\mathcal{O}\big(\varepsilon^2\big)\\
		& &&= \
		\HAR(\rho) && &&- \varepsilon \rho\big(
		\HAR'(\rho)
		-
		\h'(\rho)
		\big)
		&&\partial_x v\big(\rho,z,\WAR(\rho)\big)
		&&+
		\mathcal{O}\big(\varepsilon^2\big).
	\end{alignat*}
	Then, the second-order Taylor  representation~\eqref{Taylor} reads as
	\begin{equation}\label{proofSec43}
		q(h)=
		q\big(\HAR(\rho)\big)
		-
		\varepsilon \rho
		q'\big(\HAR(\rho)\big)
		\big(\HAR'(\rho)-\h'(\rho) \big)
		\partial_x v\big(\rho,z,\WAR(\rho)\big)		+		\mathcal{O}\big(\varepsilon^2\big)
	\end{equation}
	which implies for the second equation the expression
	
\begin{align*}
&	\partial_t z
	+
	\partial_x \Big(
	\frac{z}{\rho}
	-
	zq(h)
	\Big)\\
	=
&	\partial_t z
	+
	\partial_x \Big(
	\frac{z}{\rho}
	-
	zq\big(\HAR(\rho)\big)
	\Big)
	+
	\varepsilon
	\partial_x
	\Big(
	z\rho	q'\big(\HAR(\rho)\big)
	\big(\HAR'(\rho)-\h'(\rho) \big)
		\partial_x v\big(\rho,z,\WAR(\rho)\big)	\Big)
	+
	\mathcal{O}\big(
	\varepsilon^2
	\big).
\end{align*}
Furthermore, equation~\eqref{proofSec43} yields
\begin{equation*}    
v(\rho,z,w)
=\frac{z}{\rho}
-q(h) 
=\frac{z}{\rho}-
q\big(\HAR(\rho)\big)
+ \varepsilon \rho
q'\big(\HAR(\rho)\big)
 \big(
		\HAR'(\rho)
		-
		\h'(\rho)
		\big)
		\partial_x v\big(\rho,z,\WAR(\rho)\big)	+
\mathcal{O}\big(\varepsilon^2\big).
\end{equation*}
Hence, we have for the first equation the expression
\begin{align*}    
0&=
\partial_t \rho + \partial_x \Big(
\rho v\big(\rho,z,w\big)
\Big) \\
&=
\partial_t \rho + \partial_x \Big(
\rho v\big(\rho,z,\WAR(\rho)\big)
\Big) \\
&\quad +
	\varepsilon
	\partial_x
	\Big(\rho^2
		q'\big(\HAR(\rho)\big)
	\big(\HAR'(\rho)-\h'(\rho) \big)
		\partial_x v\big(\rho,z,\WAR(\rho)\big)	\Big)
	+
	\mathcal{O}\big(
	\varepsilon^2
	\big).
 \end{align*}   
Then, the claim follows from~$z=\rho\big(
v+q(h)\big)>0$.

\hfill	
\end{proof}

\medskip
\noindent
Under the assumption~$\HAR'(\rho)<\h'(\rho)$, when Theorem~\ref{TheoremAwRascle1} guarantees positive diffusion coefficients~$\rho \mu(\rho)>0$ and~$z \mu(\rho)>0$, we have the inequality 
\begin{equation}\label{subCharacteristicPiu}
	\begin{aligned}
		\lambda_1\big(\rho,v,\HAR(\rho) \big) 
		&= 
		v-
		q'\big(
		\HAR(\rho)
		\big) \,
		\h'(\rho)\rho \\
		&<
		v-
		q'\big(
		\HAR(\rho)
		\big)
		\HAR'(\rho)\rho 
		=
		\lambda_1^\textup{ARZ}\big(\rho,v \big) 
		\leq
		v
		= 
		\lambda_v(v)
	\end{aligned}
\end{equation}
with velocity~$
v
=
v\big(\rho,z,\WAR(\rho)\big)
$. 
The property~\eqref{subCharacteristicPiu} in turn ensures the important sub-characteristic condition for the relaxation problem  of size three in Theorem~\ref{TheoremAwRascle1}. Namely, 
the equilibrium characteristics~$\lambda_1^\textup{ARZ}\big(\rho,v \big)$ and 
$\lambda_2^\textup{ARZ}\big(\rho,v \big)=\lambda_v(v)$, which coincide with those of the Aw-Rascle-Zhang model, are within the range of the frozen characteristics $\lambda_1\big(\rho,v,\HAR(\rho) \big) $ 
and $\lambda_2\big(\rho,v,\HAR(\rho) \big)
=
\lambda_3\big(\rho,v,\HAR(\rho) \big)
=
\lambda_v(v) = v
$ 
if and only if the advection-diffusion equation is dissipative.

\subsection{Analysis of  the frozen characteristics}
Next, we examine the behaviour close to the 
frozen characteristics~${
	\lambda_1 \big(\rho,v,\HAR(\rho)\big)
}$. To this end, the relaxation is stated with respect to the non-conservative form~\eqref{PiuS1}, 
which yields 
a relaxation term that is of Hamilton-Jacobi-type (HJ), where the solution is understood in terms of viscosity solutions~\cite{Crandall1983,Caselles1992}. 

\medskip

\begin{theorem}\label{TheoremAwRascle2}
	Let an equilibrium~$
	\HAR(\rho)
	$ 
	be given and assume the function~$q$ is three-times differentiable in a small neighbourhood around~$\HAR(\rho)$. 
	Then, the first-order correction to the local equilibrium approximation of the relaxation
	\begin{equation}\label{RelaxationHyperbolic}
		\begin{cases}
			\begin{aligned}
				\partial_t \rho + \partial_x(\rho v)&=0,\\
				\partial_t v+ \Big(v-q'(h)\h'(\rho)\rho\Big)\partial_x v
				&= 0, \\
				\partial_t
				\Big( h-\h(\rho) \Big) 
				+
				v
				\partial_x
				\Big( h-\h(\rho) \Big)
				&=-
				\frac{\HAR(\rho)-h}{\varepsilon}
			\end{aligned}
		\end{cases}
	\end{equation}
	reads as
	\begin{equation*}
		\begin{cases}
			\begin{aligned}
				\partial_t \rho + \partial_x(\rho v)&=0,\\
				\partial_t v +
				\Big(
				v-q'\big(\HAR(\rho)\big)\h'(\rho)\rho
				\Big)	\partial_x v
				&=
-				\varepsilon
\rho \gamma(\rho)
(\partial_x v)^2
				+\mathcal{O}\big(\varepsilon^2\big)	\end{aligned}
		\end{cases}
	\end{equation*}
for 
$
	\gamma(\rho)\coloneqq
	q''\big(
	\HAR(\rho)
	\big)
	\h'(\rho)
	\rho
	\big(\HAR'(\rho)-\h'(\rho) \big).
$  
	In terms of viscosity solutions, it is equivalent to a hyperbolic system with eigenvalues
	\begin{align*}	
	\lambda_1^{\textup{HJ}} (\rho,v) &= \Big(
	v-q'\big(\HAR(\rho)\big)\h'(\rho)\rho
	\Big)-\varepsilon 
    \gamma(\rho)
	\dot{\rho}, \\
	\lambda_2^{\textup{HJ}}(\rho,v)
	&=v.
	\end{align*}

	
\end{theorem}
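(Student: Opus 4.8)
The plan is to run a first-order Chapman-Enskog expansion, following the lines of the proof of Theorem~\ref{TheoremAwRascle1}, but now applied to the non-conservative relaxation~\eqref{RelaxationHyperbolic}. First I would write $h=\HAR(\rho)+\varepsilon h^{(1)}+\mathcal{O}(\varepsilon^2)$ and insert this into the third equation. The relaxation term produces $-\big(\HAR(\rho)-h\big)/\varepsilon=h^{(1)}+\mathcal{O}(\varepsilon)$, while the transport part on the left reduces to $\partial_t\big(\HAR(\rho)-\h(\rho)\big)+v\,\partial_x\big(\HAR(\rho)-\h(\rho)\big)+\mathcal{O}(\varepsilon)$. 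Using the leading-order continuity equation $\partial_t\rho=-\partial_x(\rho v)$ to eliminate $\partial_t\rho$, the contributions involving $v\,\partial_x\rho$ cancel and one is left with $h^{(1)}=-\rho\big(\HAR'(\rho)-\h'(\rho)\big)\partial_x v+\mathcal{O}(\varepsilon)$, hence $h=\HAR(\rho)-\varepsilon\rho\big(\HAR'(\rho)-\h'(\rho)\big)\partial_x v+\mathcal{O}(\varepsilon^2)$. This coincides with the hesitation expansion already established inside the proof of Theorem~\ref{TheoremAwRascle1}, there written in the $(\rho,z,w)$ variables.

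Next I would substitute this expansion into the coefficient $q'(h)\,\h'(\rho)\,\rho$ that appears in the second equation of~\eqref{RelaxationHyperbolic}. A first-order Taylor expansion of $q'$ about $\HAR(\rho)$ gives $q'(h)=q'\big(\HAR(\rho)\big)-\varepsilon\rho\big(\HAR'(\rho)-\h'(\rho)\big)q''\big(\HAR(\rho)\big)\partial_x v+\mathcal{O}(\varepsilon^2)$, so that $q'(h)\,\h'(\rho)\,\rho=q'\big(\HAR(\rho)\big)\h'(\rho)\rho-\varepsilon\rho\,\gamma(\rho)\,\partial_x v+\mathcal{O}(\varepsilon^2)$ with $\gamma(\rho)$ exactly as in the statement. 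Plugging this back into $\partial_t v+\big(v-q'(h)\h'(\rho)\rho\big)\partial_x v=0$ and moving the $\mathcal{O}(\varepsilon)$ piece to the right-hand side produces the claimed Hamilton-Jacobi-type equation $\partial_t v+\big(v-q'(\HAR(\rho))\h'(\rho)\rho\big)\partial_x v=-\varepsilon\rho\,\gamma(\rho)(\partial_x v)^2+\mathcal{O}(\varepsilon^2)$; the $\rho$-equation is left unchanged.

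To read off the characteristic structure I would rewrite the corrected second equation in quasilinear form, $\partial_t v+\big[\,v-q'(\HAR(\rho))\h'(\rho)\rho+\varepsilon\rho\,\gamma(\rho)\,\partial_x v\,\big]\partial_x v=\mathcal{O}(\varepsilon^2)$, and then substitute the continuity relation $\rho\,\partial_x v=-(\partial_t+v\partial_x)\rho=-\dot\rho$ into the $\varepsilon$-correction. This yields the first, solution-dependent eigenvalue $\lambda_1^{\textup{HC}}=\big(v-q'(\HAR(\rho))\h'(\rho)\rho\big)-\varepsilon\,\gamma(\rho)\,\dot\rho$, whereas the unchanged $\rho$-equation together with the decoupled $v$-field keeps $\lambda_2^{\textup{HC}}=v$. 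Since the $v$-equation is of Hamilton-Jacobi type, its solutions must be understood in the viscosity sense~\cite{Crandall1983,Caselles1992}, which is exactly the framework in which this quasilinearization, and hence the asserted equivalence with a hyperbolic system, is meaningful.

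The step I expect to be the main obstacle is the error bookkeeping rather than any single computation. One has to check that replacing the time derivatives (in particular $\partial_t\rho$) by their leading-order values perturbs $h^{(1)}$ only at order $\mathcal{O}(\varepsilon)$, hence the final equation only at order $\mathcal{O}(\varepsilon^2)$; that the Taylor remainder of $q'$ is genuinely $\mathcal{O}(\varepsilon^2)$, which is precisely where the three-times differentiability of $q$ is needed, in contrast to the two-times differentiability that suffices in Theorem~\ref{TheoremAwRascle1}, since here the quantity that has to be expanded to second order is $q'(h)$ (which multiplies $\partial_x v$) rather than $q(h)$ itself; and that the passage to the equivalent hyperbolic system is meaningful only in the viscosity-solution sense, because $\lambda_1^{\textup{HC}}$ depends on $\partial_x v$ (equivalently on $\dot\rho$), so that the equation is genuinely nonlinear in the gradient.
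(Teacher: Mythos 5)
Your Chapman--Enskog part coincides with the paper's proof: the expansion $h=\HAR(\rho)+\varepsilon h^{(1)}+\mathcal{O}(\varepsilon^2)$, the use of the continuity equation to get $h^{(1)}=-\rho\big(\HAR'(\rho)-\h'(\rho)\big)\partial_x v+\mathcal{O}(\varepsilon)$, the Taylor expansion of $q'(h)$ about $\HAR(\rho)$ (which is indeed where the three-times differentiability enters, exactly as you observe), and the resulting Hamilton--Jacobi-type correction $-\varepsilon\rho\gamma(\rho)(\partial_x v)^2$ are all the same computations as in the paper.

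Where you fall short is the final assertion that the corrected system is ``equivalent to a hyperbolic system'' with the stated eigenvalues. You rewrite the $v$-equation with the gradient-dependent coefficient $v-q'\big(\HAR(\rho)\big)\h'(\rho)\rho+\varepsilon\rho\gamma(\rho)\partial_x v$ and simply read this off as $\lambda_1^{\textup{HC}}$ after substituting $\rho\,\partial_x v=-\dot\rho$; but a quasilinear form whose coefficient matrix depends on $\partial_x v$ is not a hyperbolic system, so invoking viscosity solutions does not by itself deliver the claimed equivalence -- it only restates the problem. The paper closes this gap constructively: it introduces the auxiliary unknown $\alpha\coloneqq\partial_x v$, differentiates the corrected $v$-equation with respect to $x$ to obtain an evolution equation for $\alpha$, and assembles the continuity equation, the $v$-equation and the $\alpha$-equation (dropping $\mathcal{O}(\varepsilon^2)$ terms) into a genuine quasilinear system $\partial_t\u+Q(\u)\partial_x\u=0$ in $\u=(\rho,v,\alpha)^\T$, whose coefficient matrix depends only on the state. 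Its eigenvalues are computed explicitly as $(v-\beta(\rho))+\varepsilon\rho\alpha\gamma(\rho)$, $v$ and a third speed $(v-\beta(\rho))+2\varepsilon\rho\alpha\gamma(\rho)$, with $\beta(\rho)=q'\big(\HAR(\rho)\big)\h'(\rho)\rho$, and only then does the relation $\dot\rho=-\rho\alpha$ yield the stated $\lambda_1^{\textup{HC}}$ and $\lambda_2^{\textup{HC}}$. To complete your argument you would need this augmentation step (or an equivalent device), since without exhibiting such a system the second half of the theorem is asserted rather than proved.
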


\begin{proof}	
	We apply the Chapman-Enskog expansion~$h=\HAR(\rho)+\varepsilon h^{(1)}+\mathcal{O}\big(\varepsilon^2\big)$	to the third equation of system~\eqref{RelaxationHyperbolic}, which yields
	\begin{alignat*}{8}
		&	h^{(1)}
		&&=
		\partial_t
		&&\Big( \HAR(\rho)&&-\h(\rho) &&\Big) 
		+
		v
		\partial_x
		\Big( \HAR(\rho)-\h(\rho) \Big)
		&&+\mathcal{O}(\varepsilon) \\
		& &&=
		&&\Big(\HAR'(\rho)&&-\h'(\rho) &&\Big)
		\Big(
		\partial_t \rho + v \partial_x \rho
		\Big)	
		&&+\mathcal{O}(\varepsilon) \\
		& &&=-
		&&\Big(\HAR'(\rho)&&-\h'(\rho) &&\Big)
		\rho\, \partial_x v
		&&+\mathcal{O}(\varepsilon).
	\end{alignat*}
	Hence, we have the expansion~$h=\HAR(\rho)
	-
	\varepsilon \rho\big(\HAR'(\rho)-\h'(\rho) \big)
	\partial_x v+\mathcal{O}\big(\varepsilon^2\big)$.  Taylor's theorem states for some~$\xi\in\big(h,\HAR(\rho) \big)$ or~$\xi\in\big(\HAR(\rho), h\big)$, respectively, which implies the bound~$ (h-\xi)=\mathcal{O}(\varepsilon)
	$, 
	the approximation
	\begin{align*}
		q'(h)
		&=
		q'\big(\HAR(\rho))
		+
		\big(
		h-\HAR(\rho)
		\big)
		\Big[
		q''\big(
		\HAR(\rho)
		\big)
		+
		q^{(3)}(\xi) (h-\xi)
		\Big] \\
		&=
		q'\big(\HAR(\rho))
		-
		\varepsilon
		q''\big(\HAR(\rho)\big)
		\rho\big(\HAR'(\rho)-\h'(\rho) \big)
		\partial_x v
		+
		\mathcal{O}\big(\varepsilon^2\big).
	\end{align*}
	Then, the second equation of  system~\eqref{RelaxationHyperbolic} reads as
	\begin{align}
		0&=	\partial_t v + \Big(v-q'(h)\h'(\rho)\rho\Big)\partial_x v \label{HJ}\\
		&	=
		\partial_t v +
		\Big(
		v-
		q'\big(\HAR(\rho)\big)\h'(\rho)\rho
		\Big)	\partial_x v
		+
		\varepsilon
        \rho
        \gamma(\rho)(\partial_x v)^2
		+\mathcal{O}\big(\varepsilon^2\big).\nonumber
	\end{align}
	We introduce the auxiliary variables 
	$\alpha\coloneqq \partial_x v
    $ and
    $
	\beta(\rho)
	\coloneqq
	q'\big(
	\HAR(\rho)
    \big)
	\h'(\rho)\rho
	$. 
	Differentiating equation~\eqref{HJ} with respect to space~${x\in\mathbb{R}}$ yields
	\begin{equation}\label{HJ1}
\begin{aligned}
0&=		\partial_t \alpha +
		\Big(
		v-\beta(\rho)
		\Big)	\partial_x \alpha \\
& \quad		+
		\alpha\partial_x v- \alpha
		\beta'(\rho)\partial_x\rho
		+\varepsilon\Big(
		\alpha^2 \partial_\rho \big( \rho\gamma(\rho) \big)\partial_x \rho 
		+2\rho \gamma(\rho)\alpha \partial_x \alpha \Big)
        +
        \mathcal{O}\big(\varepsilon^2\big).
\end{aligned}
	\end{equation}
	Neglecting terms of order $\mathcal{O}\big(\varepsilon^2\big)$, the continuity equation~$\partial_t \rho + \partial_x(\rho v) =0$, equation~\eqref{HJ} and~\eqref{HJ1} 
	form altogether the hyperbolic quasilinear form 
    $
    \partial_t \u + Q(\u) \partial_x \u=0 
    $ 
    with unknowns~$
    \u= (\rho,v,\alpha)^\T
    $ and matrix function
	$$
Q(\u)=
    	\begin{pmatrix}
		v & \rho & 0 \\
		0	& v-\beta(\rho) + \varepsilon \rho \alpha \gamma(\rho) & 0 \\
		-\alpha \beta'(\rho) 
		+
		\varepsilon
		\alpha^2 \partial_\rho \big( \rho\gamma(\rho) \big)
		& \alpha
		& v-\beta(\rho)+\varepsilon 2\rho \alpha \gamma(\rho)
	\end{pmatrix}.
	$$
	Its characteristic speeds read as
	$$
	\lambda_1^{\textup{HJ}}(\rho,v) = \big(
	v-\beta(\rho)
	\big)+
	\varepsilon \rho \alpha \gamma(\rho), \ \
	\lambda_2^{\textup{HJ}}(\rho,v)
	=v    
	\ \ \text{and}\ \  
	\widetilde{\lambda_3^{\textup{HJ}}}(\rho,v) = \big(
	v-\beta(\rho)
	\big)+
	\varepsilon 2\rho \alpha \gamma(\rho).
	$$
	The claim follows from the relation~$\dot{\rho} = -\rho \alpha$. 

        \hfill	
\end{proof}

\medskip

\noindent
Theorem~\ref{TheoremAwRascle2} allows to properly choose functions~$\HAR(\rho)$, $\h(\rho)$ and $q(h)$ that account for different driver's  attitudes depending on de- and acceleration. Namely, the frozen characteristic~$\lambda_1\big(\rho,v,\HAR(\rho) \big) $ 
states how fast information travels backward as a function of the density. This speed depends in the TOM model also on acceleration and deceleration as the relation
\begin{equation*}
	\lambda_1^{\textup{HJ}}(\rho,v) = \lambda_1\big(\rho,v,\HAR(\rho) \big)
	-
	\varepsilon \gamma(\rho) \dot{\rho} 
\end{equation*} 
for 	$\gamma(\rho)
	= 
	q''\big(
	\HAR(\rho)
	\big)
	\h'(\rho)\rho
	\big(\HAR'(\rho)-\h'(\rho) \big)
$
shows. This allows to model the in the real world observable features
\begin{alignat*}{8}
	&\lambda_1^{\textup{HJ}}(\rho,v)
	&&   \leq
	\lambda_1\big(\rho,v,\HAR(\rho) \big) 
	&&\quad \text{if drivers accelerate,}\\
	&\lambda_1^{\textup{HJ}}(\rho,v)
	&&   \geq
	\lambda_1\big(\rho,v,\HAR(\rho) \big) 
	&&\quad \text{if drivers decelerate.}
\end{alignat*}

\begin{remark}
	We note the similarity and the main difference between the proofs of Theorem~\ref{TheoremAwRascle1}
	and Theorem~\ref{TheoremAwRascle2}. 
	Starting from the relaxation to the third equation, we obtain expressions for the first-order Chapman-Enskog expansion in terms of
	\begin{alignat*}{8}
		&    h&&=\HAR(\rho)
		&&	-
		\varepsilon \rho\big(\HAR'(\rho)-\h'(\rho) \big)
		&&	\partial_x v
		&&+
		\mathcal{O}\big(\varepsilon^2\big),\\
		&	w&&=
		\WAR(\rho)
		&&	-
		\varepsilon
		\big[
		\WAR'(\rho)\rho-\WAR(\rho)
		\big]
		&&	\partial_x v\big(\rho,z,\WAR(\rho)\big)
		&&	+
		\mathcal{O}\big(\varepsilon^2\big).
	\end{alignat*}
	In fact, both expressions are equivalent up to second order although they are stated with respect to different variables. 
	Then, the Chapman-Enskog-type expansions are inserted into the second equation for the velocity. 
	Theorem~\ref{TheoremAwRascle2} is based on a non-conservative form of the second equation. This leads to a relaxation term that is of Hamilton-Jacobi-type. 
	In contrast, Theorem~\ref{TheoremAwRascle1} makes use of a conservative form which leads to an advection-diffusion equation. In this case, we obtain a similar result as in Section~\ref{sec:ARZ}, where the relaxation of the Aw-Rascle-Zhang model to the first-order LWR model is also related to an advection-diffusion equation. An explanation is given in~\cite{Herty:2020}, 
	where a Chapman-Enskog-type expansion is  inserted into  a conservative form, namely the continuity equation.

\end{remark}

\subsection{Relaxation to a prescribed velocity of a multiclass system}
We examine in this section the relation to traffic flow models of the type
\begin{equation}\label{FixedVelocity}
	\partial_t
	\begin{pmatrix}
		\rho \\ \rho \omegaTilde
	\end{pmatrix}
	+\partial_x
	\begin{pmatrix}
		\Vtilde(\rho,\omegaTilde)	\rho \\ \Vtilde(\rho,\omegaTilde) \rho \omegaTilde
	\end{pmatrix}
	=0
	\quad
	\text{or equivalently}
	\quad
	\begin{cases}
		\begin{aligned}
			\partial_t \rho + \partial_x\big(\rho \Vtilde(\rho,\omegaTilde) \big)&=0, \\
			\partial_t \omegaTilde + \Vtilde(\rho,\omegaTilde) \partial_x \omegaTilde&= 0,
		\end{aligned}
	\end{cases}
\end{equation}
where~$\omegaTilde$ is a driver-dependent quantity. Such systems generate a  
family of velocity curves~$\Vtilde(\cdot,\omegaTilde)$. 
As explained in~\cite[Sec.~2.3]{fan2013comparative}, 
the ARZ model is of this form. Furthermore, the  generalization of the LWR model can be explained by the choice~$\omegaTilde=\omega=v+P(\rho)$, where this driver-dependent quantity  moves with the vehicles and 
influences the family of velocity curves~$
\Vtilde(\rho,\omega)
=
\omega
-P(\rho) 
$. \\

\noindent
Another traffic flow model of the form~\eqref{FixedVelocity} has been recently introduced to account for hysteresis~\cite{Corli:2019}. 
Since the speed-density relations corresponding to acceleration and deceleration 
are usually different, the function
$$
\h(\rho) \coloneqq
\chi_{A}(\rho) \hA(\rho)
+
\chi_{D}(\rho) \hD(\rho)
$$ 
is made dependent on the regions
$$
A(\rho) \coloneqq
\Big\{
(t,x) \in \mathbb{R}\times \mathbb{R}^+
\ \Big| \
\dot{\rho}(t,x)<0
\Big\}
\quad\text{and}\quad
D(\rho) \coloneqq
\Big\{
(t,x) \in \mathbb{R}\times \mathbb{R}^+
\ \Big| \
\dot{\rho}(t,x)>0
\Big\},
$$
where cars accelerate or slow down. 
Then, a specific~velocity~$\vCorli(\rho,h)$ is derived that depends on hysteresis~$h$ to account for the multi-valued speed-density relations. By defining the term
\begin{equation*}
	\HCorli(\rho,v)
	\coloneqq
	\chi_{A}(\rho) \Big(
	\partial_t\hA(\rho)
	+
	v \partial_x\hA(\rho)
	\Big)
	+
	\chi_{D}(\rho) \Big(
	\partial_t\hD(\rho)
	+
	v \partial_x\hD(\rho)
	\Big),
\end{equation*}
the model in~\cite{Corli:2019} can be written as
\begin{equation}\label{CorliLWR}
	\begin{cases}
		\begin{aligned}
			\partial_t \rho + \partial_x\big(\rho \vCorli(\rho,h) \big)&=0, \\
			\partial_t h + \vCorli(\rho,h) \partial_xh&= \HCorli\big(\rho,\vCorli(\rho,h)\big).
		\end{aligned}
	\end{cases}
\end{equation}
Therefore, the system~\eqref{CorliLWR} is of the form  given in equation~\eqref{FixedVelocity}. 
Here, we have
\begin{alignat*}{8}
&	\HCorli\big(\rho,\vCorli(\rho,h)\big)
	&&= &&
	\hCorli'(\rho) \Big[ -
	\partial_x \big( \rho \vCorli(\rho,h) \big) 
	+ 
	\vCorli(\rho,h) \partial_x \rho \Big] \\
&	&&= -&&
	\hCorli'(\rho) \Big[ 
	\rho \partial_\rho \vCorli(\rho,h)\partial_x \rho
	+
	\rho \partial_h \vCorli(\rho,h)\partial_x h \Big].
\end{alignat*}
which yields the quasilinear form~$
\partial_t \u + Q(\u) \partial_x \u=0 
$ 
for the unknowns~$\u=( \rho,h )^\T$ and the matrix function
$$
Q(\rho,h)=
\begin{pmatrix}
	\vCorli(\rho,h)
	+
	\rho \partial_\rho \vCorli(\rho,h)
	&
	\rho \partial_h \vCorli(\rho) \\
	\hCorli'(\rho) 
	\rho \partial_\rho \vCorli(\rho,h)
	&
	\hCorli'(\rho)
	\rho \partial_h \vCorli(\rho,h) 
	+\vCorli(\rho,h)
\end{pmatrix}.
$$
Hence, the characteristic speeds are
$$
\lambda_1^C(\rho,h)
=
\vCorli(\rho,h)
+
\rho\big[
\partial_\rho \vCorli(\rho,h)
+
\hCorli'(\rho)
\partial_h \vCorli(\rho,h)\big]
\quad
\text{and}
\quad
\lambda_2^C(\rho,h)
=
\vCorli(\rho,h).
$$

\noindent
While the perspective of~\cite{fan2013comparative} is more from a parameterized multiclass system, where driver dependent properties specify a given velocity curve, the approach of~\cite{Corli:2019,Corli:2023} is more related to the idea of the here proposed model, namely a closure problem. Inspired by similarly interpreting $h$  as hysteresis, we prove the following theorem. 

\medskip

\begin{theorem}\label{TheoremRelaxationCorli}
	The first-order correction to the local equilibrium approximation of the relaxed model
	\begin{equation}\label{Corli}
		\begin{cases}
			\begin{aligned}
				\partial_t \rho + \partial_x(\rho v)&=0,\\
				\partial_t v+ \Big(v-q'(h)\h'(\rho)\rho\Big)\partial_x v
				&= \frac{ \vCorli(\rho,h)-v }{\varepsilon} , \\
				\partial_t w +  \partial_x (wv
				)
				&=
				0		\end{aligned}
		\end{cases}
	\end{equation}	
	for 
	$w
	=
	\rho\big(
	h-\h(\rho) \big)
	$   
	reads as
	\begin{equation}\label{RelaxationCorli}
		\partial_t
		\begin{pmatrix}
			\rho \\ w
		\end{pmatrix}
		+\partial_x
		\begin{pmatrix}
			\vCorli(\rho,h)	\rho \\ \vCorli(\rho,h) w 
		\end{pmatrix}
		=
		-
		\varepsilon
		\partial_x 
		\left(
		\mu_C(\rho,h)
		\partial_x
		\begin{pmatrix}
			\rho \\ h
		\end{pmatrix}
		\right)
		+\mathcal{O}\big(\varepsilon^2\big)
	\end{equation}	
	with diffusion matrix
	\begin{align*}
		&\muCorli(\rho,h)
		\coloneqq
		\Big(
		\lambda_1^C(\rho,h)
		-\big(\vCorli(\rho,h)
		-q'(h)\h'(\rho)\rho
		\big)\Big)
		\begin{pmatrix}
			\rho	\partial_\rho \vCorli(\rho,h) 
			& 
			\rho	\partial_h \vCorli(\rho,h) \\
			w	\partial_\rho \vCorli(\rho,h) 
			& 
			w	\partial_h \vCorli(\rho,h) 	
		\end{pmatrix}\\
		&\text{for}\quad
		\lambda_1^C(\rho,h)
		=
		\vCorli(\rho,h)
		+
		\rho\big[
		\partial_\rho \vCorli(\rho,h)
		+
		\h'(\rho)
		\partial_h \vCorli(\rho,h)\big].
	\end{align*}
	In particular, if the relation
	$$
	q'(h)\h'(\rho)\rho
	=
	\vCorli(\rho,h)-\lambda_1^C(\rho,h)
	=
	\rho\big[
	\partial_\rho \vCorli(\rho,h)
	+
	\h'(\rho)
	\partial_h \vCorli(\rho,h)\big]
	$$
	holds, the equations~\eqref{Corli} and~\eqref{RelaxationCorli} coincide up to terms of order~$\mathcal{O}\big(\varepsilon^2\big)$.
	
\end{theorem}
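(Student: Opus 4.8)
The plan is to run a Chapman--Enskog expansion of~\eqref{Corli} around the local equilibrium manifold singled out by sending $\varepsilon\to 0$ in the second equation, namely $v=\vCorli(\rho,h)$. Writing $v=\vCorli(\rho,h)+\varepsilon v^{(1)}+\mathcal{O}(\varepsilon^2)$, the first and third equations of~\eqref{Corli} reduce at leading order to the conservative pair
\[
\partial_t\rho+\partial_x\big(\rho\,\vCorli(\rho,h)\big)=\mathcal{O}(\varepsilon),
\qquad
\partial_t w+\partial_x\big(w\,\vCorli(\rho,h)\big)=\mathcal{O}(\varepsilon),
\]
which is a model of the form~\eqref{FixedVelocity}, with driver quantity $w/\rho=h-\h(\rho)$. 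First I would combine these two relations with the algebraic identity $h=w/\rho+\h(\rho)$ to get the leading-order time derivatives $\partial_t\rho$, $\partial_t w$ and hence $\partial_t h$; after the $\partial_x\vCorli$--terms cancel one is left with $\partial_t h=-\vCorli\,\partial_x h-\rho\,\h'(\rho)\,\partial_x\vCorli+\mathcal{O}(\varepsilon)$, which is the relation that $h$ satisfies in the Corli model~\eqref{CorliLWR}.

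Next I would differentiate $\vCorli(\rho,h)$ in time along the reduced dynamics, $\partial_t\vCorli=\partial_\rho\vCorli\,\partial_t\rho+\partial_h\vCorli\,\partial_t h$, substitute the two expressions just obtained, and recognize the combination $\rho\big(\partial_\rho\vCorli+\h'(\rho)\partial_h\vCorli\big)=\lambda_1^C(\rho,h)-\vCorli(\rho,h)$. This collapses to the compact identity $\partial_t\vCorli=-\lambda_1^C(\rho,h)\,\partial_x\vCorli+\mathcal{O}(\varepsilon)$. Inserting $v=\vCorli+\mathcal{O}(\varepsilon)$ into the left-hand side of the second equation of~\eqref{Corli} and solving for the correction gives
\[
\begin{aligned}
v^{(1)}
&=-\partial_t\vCorli-\big(\vCorli-q'(h)\h'(\rho)\rho\big)\,\partial_x\vCorli \\
&=\Big(\lambda_1^C(\rho,h)-\big(\vCorli(\rho,h)-q'(h)\h'(\rho)\rho\big)\Big)\,\partial_x\vCorli(\rho,h).
\end{aligned}
\]

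Finally I would substitute $v=\vCorli+\varepsilon v^{(1)}+\mathcal{O}(\varepsilon^2)$ back into the conservative first and third equations of~\eqref{Corli}, expand $\partial_x\vCorli=\partial_\rho\vCorli\,\partial_x\rho+\partial_h\vCorli\,\partial_x h$, and read off row by row that $\rho\,v^{(1)}$ and $w\,v^{(1)}$ assemble into $\muCorli(\rho,h)\,\partial_x(\rho,h)^{\T}$ with the stated, non-symmetric, diffusion matrix; moving these terms to the right-hand side yields~\eqref{RelaxationCorli}. For the final assertion it suffices to observe that under the hypothesis $q'(h)\h'(\rho)\rho=\vCorli(\rho,h)-\lambda_1^C(\rho,h)$ the scalar prefactor $\lambda_1^C-\big(\vCorli-q'(h)\h'(\rho)\rho\big)$ vanishes identically, so $\muCorli\equiv 0$, the first-order correction disappears, and~\eqref{RelaxationCorli} collapses onto the zero-relaxation limit of~\eqref{Corli} up to $\mathcal{O}(\varepsilon^2)$. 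The main obstacle I anticipate is the bookkeeping behind $\partial_t h$ and $\partial_t\vCorli$: one must track precisely which $\partial_x$--contributions cancel and identify $\rho(\partial_\rho\vCorli+\h'(\rho)\partial_h\vCorli)=\lambda_1^C-\vCorli$, since a sign slip there propagates into the diffusion matrix. It is also worth stressing that $\muCorli$ is \emph{not} symmetric, so the equivalence between the sub-characteristic condition and dissipativity from~\cite[Th.~3.1]{Chen1994} is unavailable; this is why the conclusion is phrased via $\lambda_1^C$ and the frozen speed $\vCorli-q'(h)\h'(\rho)\rho$ rather than via positivity of a diffusion coefficient.
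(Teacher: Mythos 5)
Your proposal is correct and follows essentially the same route as the paper's proof: a Chapman--Enskog expansion around $v=\vCorli(\rho,h)$, leading-order evolution of $\rho$ and $h$ (your derivation via $h=w/\rho+\h(\rho)$ is equivalent to the paper's use of $\partial_t h+v\partial_x h=-\h'(\rho)\rho\,\partial_x v$), solving the second equation for $v^{(1)}=\big(\lambda_1^C-(\vCorli-q'(h)\h'(\rho)\rho)\big)\partial_x\vCorli$, and inserting it back into the conservative $\rho$- and $w$-equations to read off $\muCorli$. Your compact intermediate identity $\partial_t\vCorli=-\lambda_1^C\,\partial_x\vCorli+\mathcal{O}(\varepsilon)$ is just a tidier packaging of the paper's expansion, and your observation that the scalar prefactor vanishes under the stated relation correctly settles the final claim.
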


\medskip

\begin{proof}
	We apply a Chapman-Enskog expansion
	$$
	v = \vCorli(\rho,h) + \varepsilon \v1
	+
	\mathcal{O}\big( \varepsilon^2 \big).
	$$	
	Due to~$
	\partial_t \h(\rho)+v\partial_x \h(\rho) = -\h'(\rho)\rho \partial_x v
	$, the first and third equation imply
	\begin{alignat*}{8}
		&\partial_t \rho 
		&&=-
		\partial_x \big( \rho \vCorli(\rho,h) \big)
		&&+\mathcal{O}(\varepsilon) \\
		& &&=
		-
		\vCorli(\rho,h) 
		\partial_x  \rho
		-
		\rho \Big( 
		\partial_\rho \vCorli(\rho,h)\partial_x \rho
		+
		\partial_h \vCorli(\rho,h)\partial_x h
		\Big)
		&&+\mathcal{O}(\varepsilon), \\
		&\partial_t h &&= 
        -\vCorli(\rho,h) \partial_x h
        - \h'(\rho) \rho \partial_x \vCorli(\rho,h) 
		 &&+\mathcal{O}(\varepsilon)\\
		& &&=
        -\vCorli(\rho,h) \partial_x h 
        -
		\h'(\rho) \rho
		\Big(\partial_\rho \vCorli(\rho,h)\partial_x \rho
		+
		\partial_h \vCorli(\rho,h)\partial_x h
		\Big)
		&&+\mathcal{O}(\varepsilon).
	\end{alignat*}
	Hence, we have asymptotically the expression
	\begin{alignat}{8}
		&	\partial_t \vCorli(\rho,h) + \Big(\vCorli(\rho,h)-q'(h)\h'(\rho)\rho\Big) \partial_x \vCorli(\rho,h) \label{ProofRelaxationCorli1}\\
		&	
		=	
		\partial_\rho \vCorli(\rho,h) \partial_t \rho 
		+ 
		\partial_h \vCorli(\rho,h) \partial_t h \nonumber\\
	&\quad	+
		\Big(\vCorli(\rho,h)-q'(h)\h'(\rho)\rho\Big) \Big(
		\partial_\rho \vCorli(\rho,h) \partial_x \rho 
		+ 
		\partial_h \vCorli(\rho,h) \partial_x h 
		\Big)  \nonumber\\
		&
		=-	 
		\partial_\rho \vCorli(\rho,h) \vCorli(\rho,h)
		\partial_x \rho
		-
		\rho\partial_\rho \vCorli(\rho,h) \Big( 
		\partial_\rho \vCorli(\rho,h)\partial_x \rho
		+
		\partial_h \vCorli(\rho,h)\partial_x h
		\Big)  \nonumber\\
		& 
		\quad-
		\partial_h \vCorli(\rho,h)
		\h'(\rho) \rho
		\Big(\partial_\rho \vCorli(\rho,h)\partial_x \rho
		+
		\partial_h \vCorli(\rho,h)\partial_x h \Big)
		-
		\partial_h \vCorli(\rho,h)
		\vCorli(\rho,h) \partial_x h   \nonumber\\ 
		& \quad +  
		\Big(\vCorli(\rho,h)-q'(h)\h'(\rho)\rho\Big) \Big(
		\partial_\rho \vCorli(\rho,h) \partial_x \rho 
		+ 
		\partial_h \vCorli(\rho,h) \partial_x h  \Big) &&+\mathcal{O}(\varepsilon) \nonumber\\
		& = - \Big(  \partial_\rho \vCorli(\rho,h)
		\partial_x \rho
		+
		\partial_h \vCorli(\rho,h) 
		\partial_x h
		\Big)
		\Big(
		\lambda_1^C(\rho,h)
		-\big(\vCorli(\rho,h)
		-q'(h)\h'(\rho)\rho
		\big)\Big)
        && + \mathcal{O}(\varepsilon), \nonumber
	\end{alignat}
where the last equality follows from the relation
$$
		\Big(
		\partial_\rho \vCorli(\rho,h) \rho
		+
		\partial_h \vCorli(\rho,h) \h'(\rho)\rho
		+q'(h)\h'(\rho)\rho\Big)
=
		\Big(
		\lambda_1^C(\rho,h)
		-\big(\vCorli(\rho,h)
		-q'(h)\h'(\rho)\rho
		\big)\Big).
$$
	Furthermore, we have
	\begin{alignat}{8}
		&\v1 &&= 
		\frac{v-\vCorli(\rho,h)}{\varepsilon}
		&& && &&+\mathcal{O}(\varepsilon) \nonumber\\
		& &&=
		-\partial_t v &&- \Big(v&&-q'(h)\h'(\rho)\rho\Big) \partial_xv
		&&+\mathcal{O}(\varepsilon) \nonumber\\
		& &&=-
		\partial_t \vCorli(\rho,h) &&+ \Big(\vCorli(\rho,h)&&-q'(h)\h'(\rho)\rho\Big) \partial_x \vCorli(\rho,h)
		&&+\mathcal{O}(\varepsilon).
		\label{ProofRelaxationCorli2}
	\end{alignat}
	
	\noindent
	The equations~\eqref{ProofRelaxationCorli1}, \eqref{ProofRelaxationCorli2} and  the expressions
	\begin{alignat*}{8}
		&	\partial_t \rho &&+ \partial_x \Big(
		\vCorli(\rho,h) \rho
		\Big)
		&&=
		-\varepsilon \partial_x \Big( \rho \v1 \Big) &&+ \mathcal{O}\big(\varepsilon^2\big), \\
		&	\partial_t w &&+ \partial_x \Big(
		\vCorli(\rho,h) w
		\Big)
		&&=
		-\varepsilon \partial_x \Big( w \v1 \Big) &&+ \mathcal{O}\big(\varepsilon^2\big) 
	\end{alignat*}	
	yield the claimed diffusion matrix.
    
\hfill	
\end{proof}

\medskip

\noindent
Theorem~\ref{TheoremRelaxationCorli} 
states a possible choice of the functions~$
q
$ and~$\h$ such that the novel proposed model leads to results that are justified in the recent publication~\cite{Corli:2019,Corli:2023}. Namely, we have a reasonable choice  if the relation $
q'(h)\h'(\rho)\rho
\approx
\vCorli(\rho,h)-\lambda_1^C(\rho,h)
$ 
holds approximately, 
where~$\vCorli(\cdot,h)$ is the family of velocity curves from~\cite{Corli:2019} that takes hysteresis effects into account, 
which can model stop-and-go waves.

\section{Discussion of the Riemann problem}\label{SectionNumerics}
In this section we discuss the solution to a Riemann problem with constant left~$\u_\ell$
and right~$\u_r$
state. 
As explained in Section~\ref{SectionMacroscopicPiu}, 
the TOM model~\eqref{PiuS1}~--~\eqref{PiuS3} forms a Temple class system where the fields $k=2,3$, which correspond to the double eigenvalue~$\lambda_v(v)=v$  and which travel faster than the first wave,
form contacts. 
Since the first field is genuinely nonlinear, it can  either produce a rarefaction or a shock wave, which is connected by an intermediate state 
$\bar{\u}=
(
\bar{\rho},\bar{v},\bar{h}
)^\T$ 
with the contact. 

Since the velocity is a Riemann invariant of the second and third field, i.e.~${
\R_2(\u)=v}$, 
$\R_3(\u)=v$, 
it remains constant over a contact wave and hence the velocity of the intermediate state satisfies~$\bar{v}=v_r$. The third unknown~$\bar{h}$ is determined by the relation~$v_\ell+q(h_\ell)
=\R_1(\u_\ell)=\R_1(\bar{\u})=\bar{v}+q(\bar{h})
=
v_r+q(\bar{h})$.

The solution corresponding to a rarefaction wave travels according to an integral curve~$\mathcal{I}(\cdot;\u) $ of the first field, which   satisfies the ODE~$
\partial_\sigma \mathcal{I}_1(\sigma;\u)
=
r_1 \big(
\mathcal{I}(\sigma;\u)
\big)$ for $
\mathcal{I}(0;\u)
=
\u
$ 
provided that it holds~${
	\mathcal{I}( \sigma^*;\u_\ell)
    =
  ( \mathcal{I}_1,\mathcal{I}_2,\mathcal{I}_3)^\T( \sigma^*;\u_\ell)=
    \bar{\u} 
}$ for some~${\sigma^*\geq 0}$. 
Since the intermediate states~$\bar{v}$ and~$\bar{h}$ are given due to the foregoing discussion, the value~$\sigma^*$ is obtained by the equations~$\mathcal{I}_2( \sigma^*;\u_\ell) = \bar{v}$ and~$\mathcal{I}_3( \sigma^*;\u_\ell) = \bar{h}$. Finally, the density in the intermediate state is given by~$\bar{\rho}=\mathcal{I}_1( \sigma^*;\u_\ell)$. 
According to~{\cite[Sec.~5.1]{BRESSAN} and an entropy admissible weak solution is then given by

$$
\u(t,x)
\coloneqq
\begin{cases}
\u_\ell
& \text{for \ \ } x<t \lambda_1(\u_\ell), \\
\mathcal{I}(\sigma;\u_\ell)
& \text{for \ \ } x\in \big( t\lambda_1(\u_\ell),t\lambda_1(\bar{\u})  \big) 
\text{ \ \ and \ \  } \nicefrac{x}{t}   =\lambda_1\big(
\mathcal{I}(\sigma;\u_\ell)
\big), \\
\bar{\u}
& \text{for \ \ } x\in \big( t\lambda_1(\bar{\u}),tv_r  \big), \\
\u_r
& \text{for \ \ } x>t v_r.
\end{cases}
$$

Figures~\ref{Figure1}~--~\ref{Figure3} show the solution to the Riemann problem with initial values
$
\rho_\ell
=0.8
$, 
$
\rho_r
=0.1
$, 
$
v_\ell
=0.2
$, 
$
v_r
=0.5
$,~$h_\ell=\rho_\ell^{\nicefrac{3}{2}}$,~$h_r=\rho_r^{\nicefrac{3}{2}}$ and initial jump at~$x_0=0$ 
for different choices of the functions~$q(h)$,~$\h(\rho)$. 
More precisely, the panels~(i)~--~(iii) state the density~$\rho$ (blue), velocity~$v$ (red) and the driver-dependent quantity~$h$ (green) with respect to the left $y$-axis. 
The characteristic speeds~$\lambda_1(\u)$ are plotted with respect to the scale of the right $y$-axis. 
Since they are increasing, characteristics do not cross and a rarefaction wave occurs and not a shock. 
Panel (vi) states the difference 
\begin{align*}
    &
   \Delta v(t,x) \coloneqq 
    v(t,x)  
-
v^{\textup{AR}}(t,x)
\quad\text{for}\quad
v^{\textup{AR}}(t,x) \coloneqq v_\ell + \frac{x - t\lambda_1(\u_\ell)    }
{t\lambda_1(\bar{\u}) - t\lambda_1(\u_\ell)   }
\big(\bar{v} - v_\ell\big) \\
&\text{and}\quad
x\in \big( t\lambda_1(\u_\ell),t\lambda_1(\bar{\u})  \big).
\end{align*}
More precisely, the velocity that is described by the classical Aw-Rascle-Zhang model has a linear increase in the rarefaction wave, which is a known feature of this model~\cite{Aw:2002}. 
We observe from the representation~\eqref{PiuS1} that the TOM model coincides with the Aw-Rascle-Zhang model in the case~$q(h)=h$. Therefore,  the fourth panel of Figure~\ref{Figure2} shows  a linear increase of the velocity over a rarefaction wave, i.e.~the difference~$\Delta v(t,x)$ is zero. Apart from this case, however, the acceleration of drivers is no more constant, which  overcomes this point of criticism concerning the Aw-Rascle-Zhang model.

\begin{figure}[H]

\begin{minipage}{0.49\textwidth}
	
	\begin{center}	
		 \textbf{(i)}\ \  
$\boldsymbol{q(h)=h^{\nicefrac{1}{2}}}$, 
$\boldsymbol{\h(\rho)=\rho^{\nicefrac{1}{2}}}$
		\vspace{-1mm}			
		\scalebox{1}{\includegraphics[width=\linewidth]{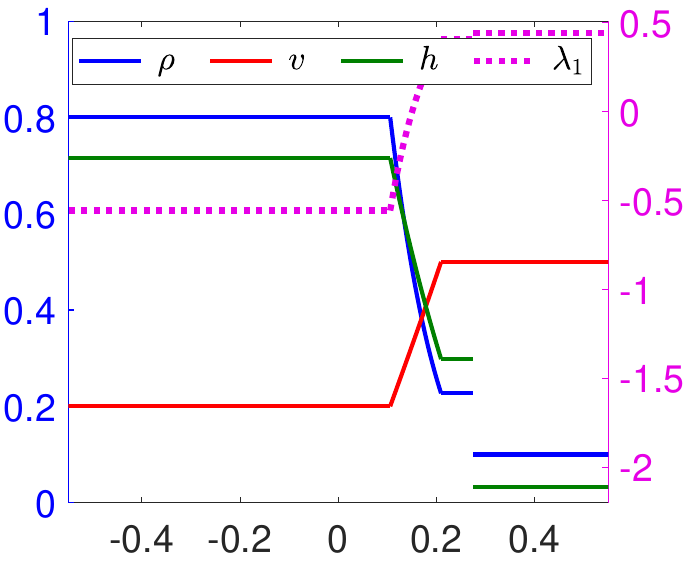}}

	\end{center}	
\end{minipage}
\hfil
\begin{minipage}{0.49\textwidth}
	
	\vspace{-2mm}
	\begin{center}
		 \textbf{(ii)}\ \  
$\boldsymbol{q(h)=h^{\nicefrac{1}{2}}}$, 
$\boldsymbol{\h(\rho)=\rho}$
\vspace{-1mm}			
		\scalebox{1}{\includegraphics[width=\linewidth]{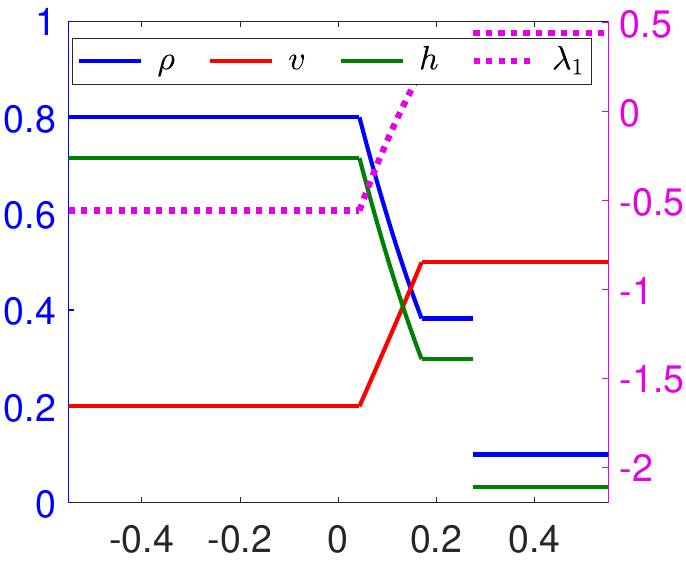}}	
		
	\end{center}
\end{minipage}

\vspace{4mm}

\begin{minipage}{0.49\textwidth}
	
	\begin{center}	
		\textbf{(iii)}\ \  
		$\boldsymbol{q(h)=h^{\nicefrac{1}{2}}}$, 
		$\boldsymbol{\h(\rho)=\rho^{2}}$
		\vspace{-1mm}			
		\scalebox{1}{\includegraphics[width=\linewidth]{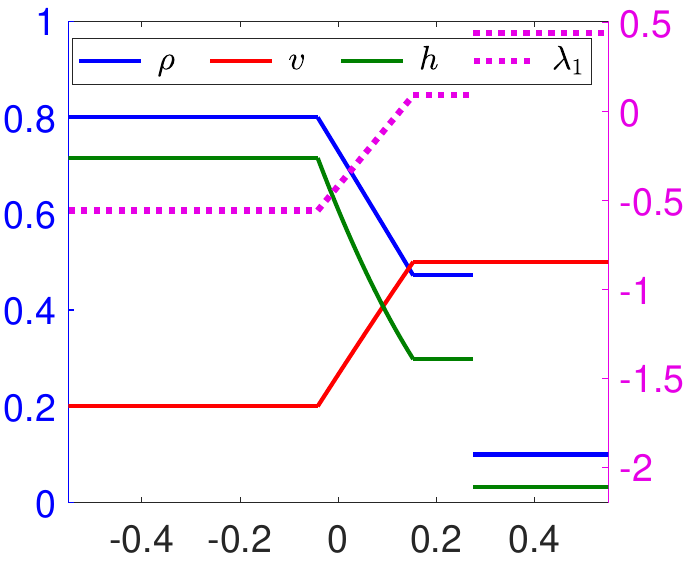}}

	\end{center}	
\end{minipage}
\hfil
\begin{minipage}{0.49\textwidth}
	
	\begin{center}
		\textbf{(iv)}\ \  
\textbf{comparison of velocities}		\vspace{-1mm}			
		\scalebox{1}{\includegraphics[width=\linewidth]{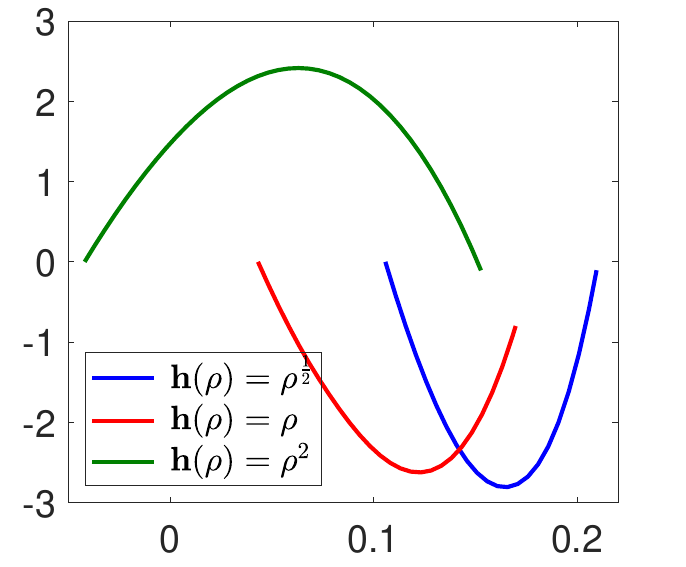}}	
		
	\end{center}
\end{minipage}


\caption{Panel (i)~--~(iii) state the solution to the Riemann problem at time $t=0.3$ for the choice~$q(h)=h^{\nicefrac{1}{2}}$. Panel (iv) highlights the difference~$\Delta v(t,x)$ over a rarefaction wave.}

\label{Figure1}	
\end{figure}

\begin{figure}[H]

	\begin{minipage}{0.49\textwidth}
		
		\begin{center}	
			\textbf{(i)}\ \  
			$\boldsymbol{q(h)=h}$, 
			$\boldsymbol{\h(\rho)=\rho^{\nicefrac{1}{2}}}$
			\vspace{-1mm}			
			\scalebox{1}{\includegraphics[width=\linewidth]{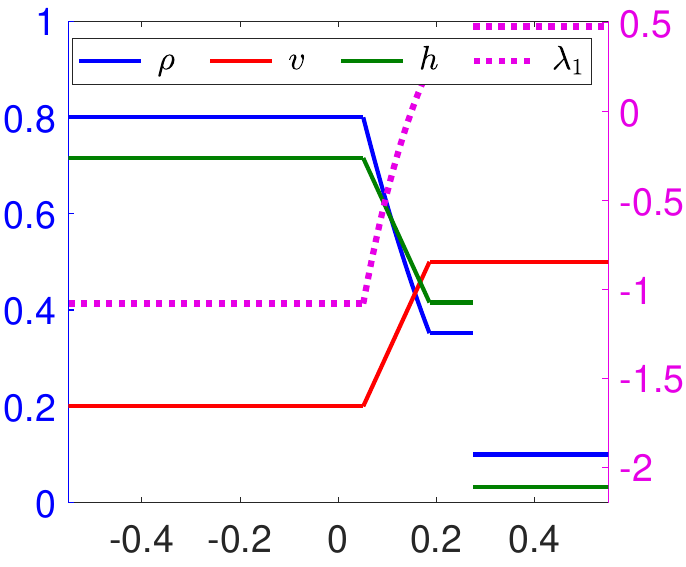}}

		\end{center}	
	\end{minipage}
	\hfil
	\begin{minipage}{0.49\textwidth}
		
		\vspace{-2mm}
		\begin{center}
			\textbf{(ii)}\ \  
			$\boldsymbol{q(h)=h}$, 
			$\boldsymbol{\h(\rho)=\rho}$
			\vspace{-1mm}			
			\scalebox{1}{\includegraphics[width=\linewidth]{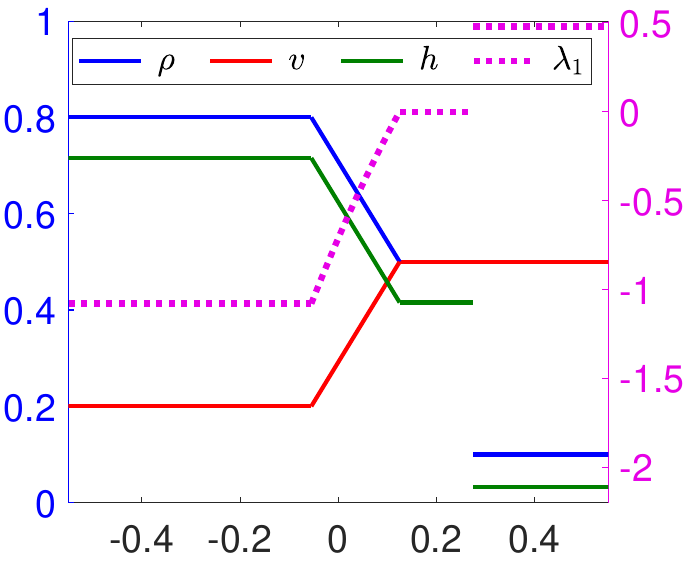}}	
			
		\end{center}
	\end{minipage}
	
	\vspace{4mm}
	
	\begin{minipage}{0.49\textwidth}
		
		\begin{center}	
			\textbf{(iii)}\ \  
			$\boldsymbol{q(h)=h}$, 
			$\boldsymbol{\h(\rho)=\rho^2}$
			\vspace{-1mm}			
			\scalebox{1}{\includegraphics[width=\linewidth]{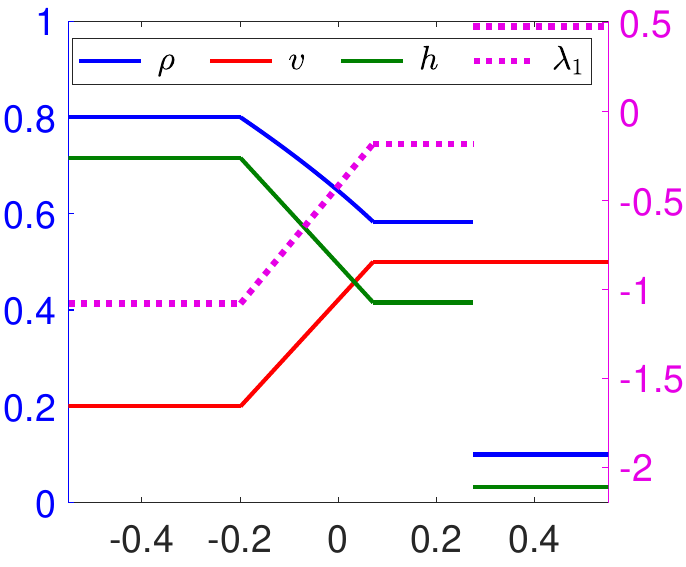}}

		\end{center}	
	\end{minipage}
	\hfil
	\begin{minipage}{0.49\textwidth}

		\begin{center}
			\textbf{(iv)}\ \  
\textbf{comparison of velocities}
        			\vspace{-1mm}			
			\scalebox{1}{\includegraphics[width=\linewidth]{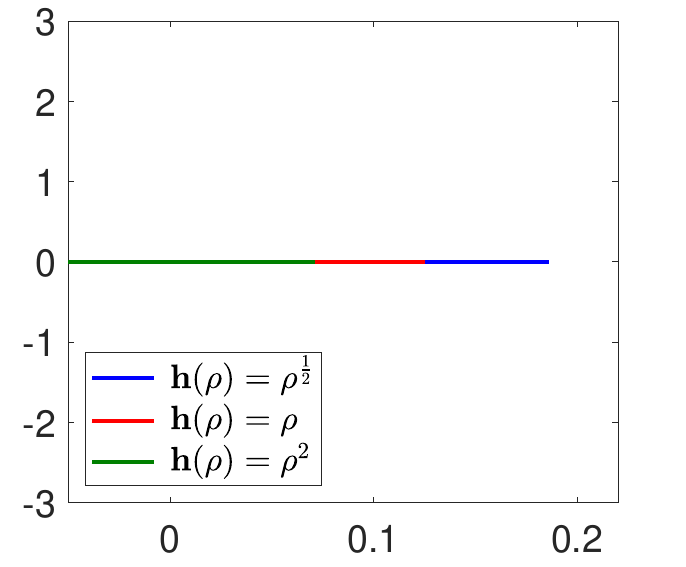}}	
			
		\end{center}
	\end{minipage}

\caption{Panel (i)~--~(iii) state the solution to the Riemann problem at time $t=0.3$ for the choice~$q(h)=h$, when the third-order model coincides with the Aw-Rascle-Zhang-model. Panel (iv) shows that the acceleration of drivers is constant over a rarefaction wave.}
	
\label{Figure2}		
\end{figure}

\begin{figure}[H]

	\begin{minipage}{0.49\textwidth}
		
		\begin{center}	
			\textbf{(i)}\ \  
			$\boldsymbol{q(h)=h^2}$, 
			$\boldsymbol{\h(\rho)=\rho^{\nicefrac{1}{2}}}$
			\vspace{-1mm}			
			\scalebox{1}{\includegraphics[width=\linewidth]{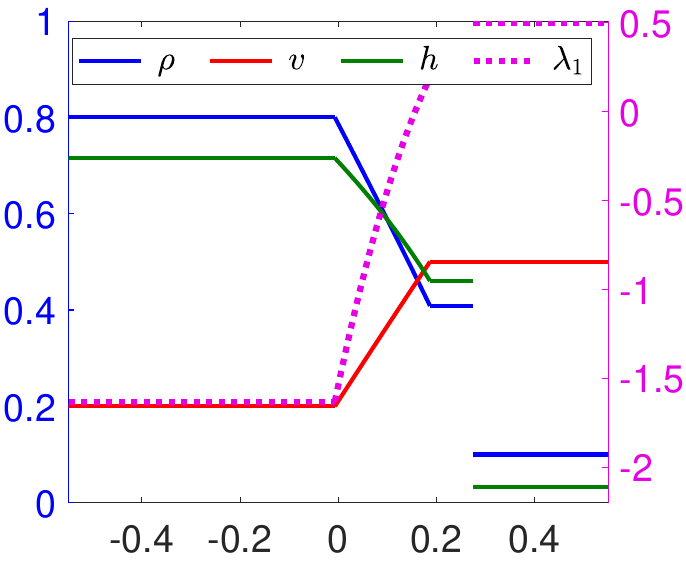}}

		\end{center}	
	\end{minipage}
	\hfil
	\begin{minipage}{0.49\textwidth}
		
		\vspace{-2mm}
		\begin{center}
			\textbf{(ii)}\ \  
			$\boldsymbol{q(h)=h^2}$, 
			$\boldsymbol{\h(\rho)=\rho}$
			\vspace{-1mm}			
			\scalebox{1}{\includegraphics[width=\linewidth]{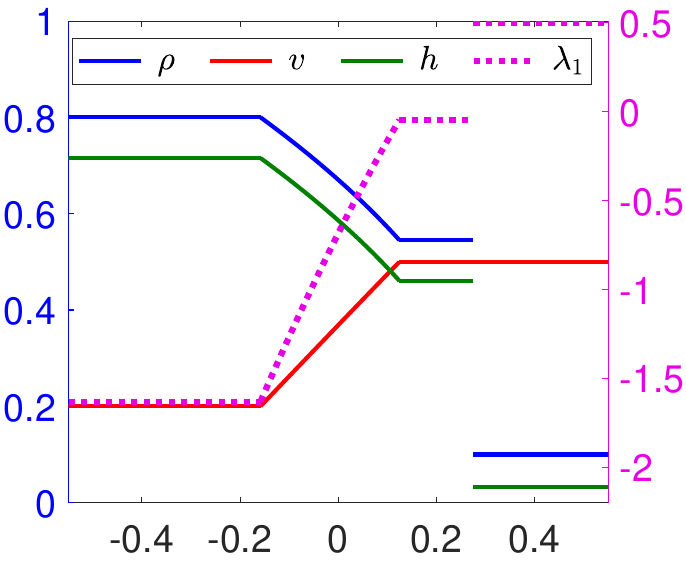}}	
			
		\end{center}
	\end{minipage}
	
	\vspace{4mm}
	
	\begin{minipage}{0.49\textwidth}
		
		\begin{center}	
			\textbf{(iii)}\ \  
			$\boldsymbol{q(h)=h^2}$, 
			$\boldsymbol{\h(\rho)=\rho^2}$
			\vspace{-1mm}			
			\scalebox{1}{\includegraphics[width=\linewidth]{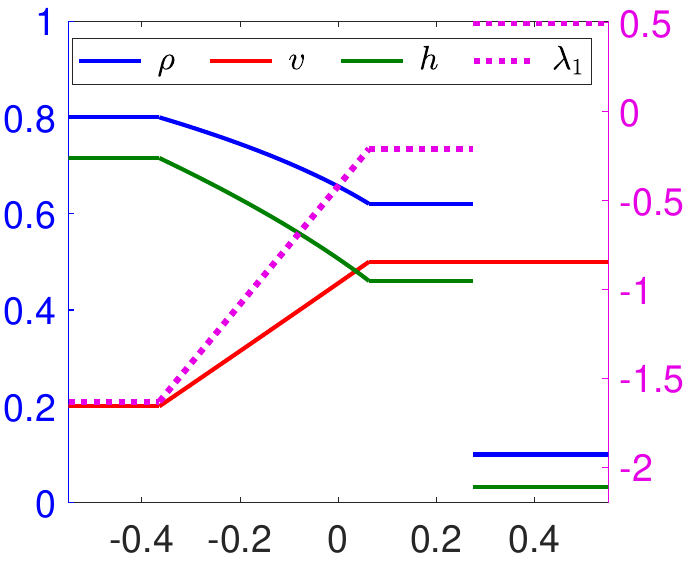}}

		\end{center}	
	\end{minipage}
	\hfil
	\begin{minipage}{0.49\textwidth}

		\begin{center}
			\textbf{(iv)}\ \  
\textbf{comparison of velocities}
			\vspace{-1mm}			
			\scalebox{1}{\includegraphics[width=\linewidth]{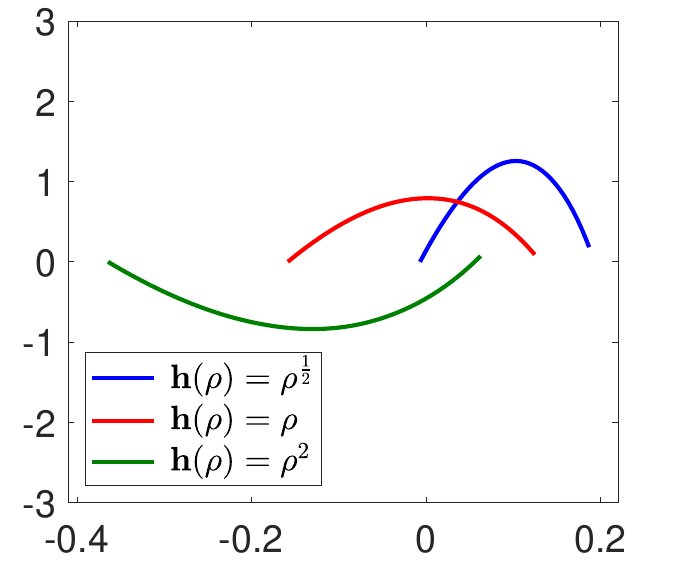}}	
			
		\end{center}
	\end{minipage}
	

\caption{Panel (i)~--~(iii) state the solution to the Riemann problem at time $t=0.3$ for the choice~$q(h)=h^2$. Panel (iv) highlights the difference~$\Delta v(t,x)$ over a rarefaction wave.}

\label{Figure3}		
\end{figure}

If the Rankine-Hugoniot condition~
$
f(\u_\ell) - f(\bar{\u})
=s(\u_\ell - \bar{\u})
$ holds, the left state is connected to the intermediate state by a shock wave with speed~$s$. Note that in this case the intermediate states~$\bar{v}$ and~$\bar{h}$ and can be obtained as in the previous discussion for the rarefaction wave, namely by making use of the Riemann invariants and the linear degeneracy of the third field.

Figure~\ref{FigureShock1} shows the solution to a Riemann problem with 
initial values 
$
\rho_\ell
=0.3
$, 
$
\rho_r
=0.7
$, 
$
v_\ell
=0.5
$, 
$
v_r
=0.2
$ and~$h_\ell=\rho_\ell^{\nicefrac{3}{2}}$, $h_r=\rho_r^{\nicefrac{3}{2}}$. 
Here, characteristics corresponding to the first field with eigenvalues~$\lambda_1(\u)$ cross and a shock occurs, which is followed by a contact corresponding to the second and third field.

\begin{figure}[H]

	\begin{minipage}{0.32\textwidth}
		
		\begin{center}	  
			$\boldsymbol{q(h)=h^{\nicefrac{1}{2}}}$, 
			$\boldsymbol{\h(\rho)=\rho^{\nicefrac{1}{2}}}$
			\vspace{-1mm}			
			\scalebox{1}{\includegraphics[width=\linewidth]{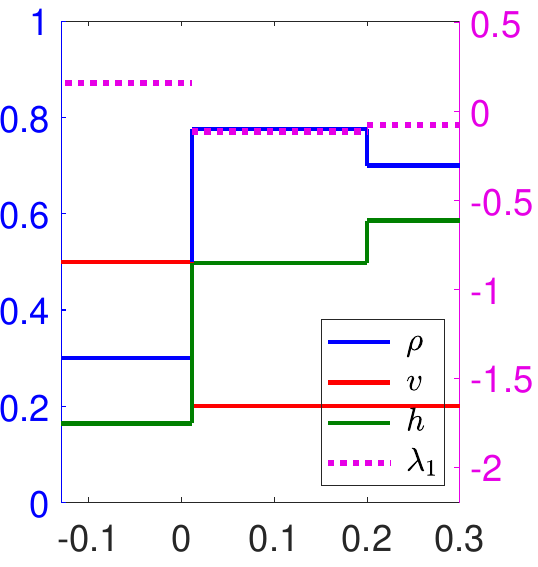}}

		\end{center}	
	\end{minipage}
	\hfil
	\begin{minipage}{0.32\textwidth}

		\begin{center}
			$\boldsymbol{q(h)=h^{\nicefrac{1}{2}}}$, 
			$\boldsymbol{\h(\rho)=\rho}$
			\vspace{-1mm}			
			\scalebox{1}{\includegraphics[width=\linewidth]{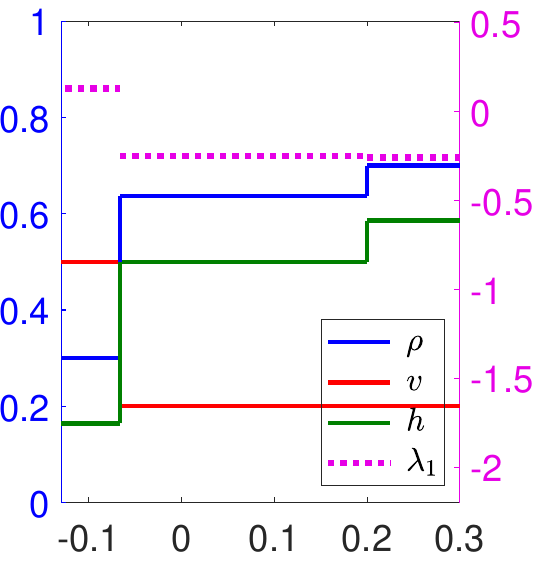}}	
			
		\end{center}
	\end{minipage}
	\hfil
	\begin{minipage}{0.32\textwidth}
		
		\begin{center}	
			$\boldsymbol{q(h)=h^{\nicefrac{1}{2}}}$, 
			$\boldsymbol{\h(\rho)=\rho^2}$
			\vspace{-1mm}			
			\scalebox{1}{\includegraphics[width=\linewidth]{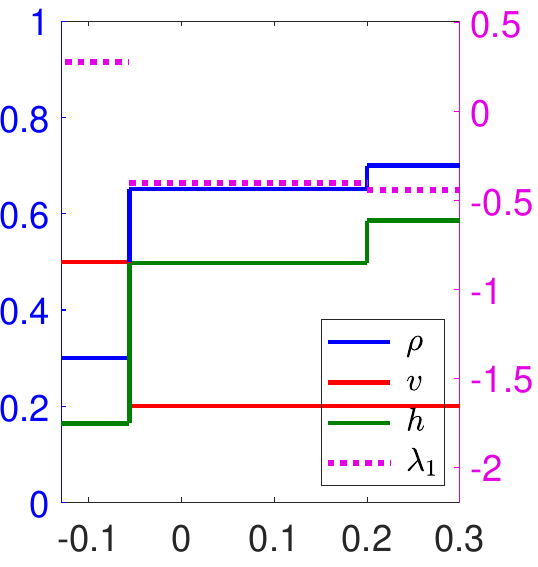}}

		\end{center}	
	\end{minipage}
	
	\vspace{4mm}
	
	\begin{minipage}{0.32\textwidth}
		
		\begin{center}	  
			$\boldsymbol{q(h)=h}$, 
			$\boldsymbol{\h(\rho)=\rho^{\nicefrac{1}{2}}}$
			\vspace{-1mm}			
			\scalebox{1}{\includegraphics[width=\linewidth]{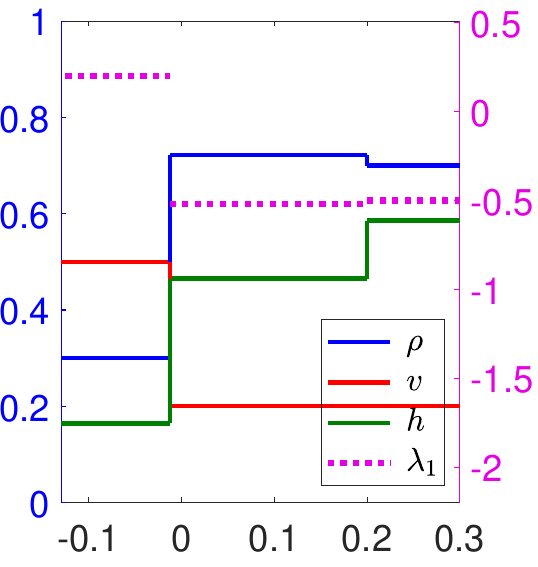}}

		\end{center}	
	\end{minipage}
	\hfil
	\begin{minipage}{0.32\textwidth}

		\begin{center}
			$\boldsymbol{q(h)=h}$, 
			$\boldsymbol{\h(\rho)=\rho}$
			\vspace{-1mm}			
			\scalebox{1}{\includegraphics[width=\linewidth]{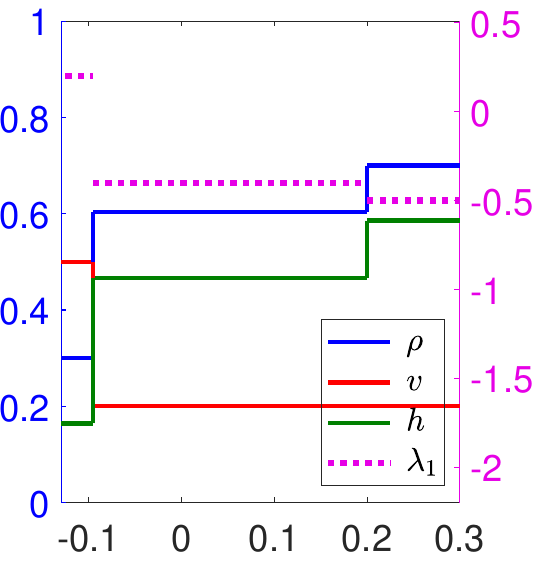}}	
			
		\end{center}
	\end{minipage}
	\hfil
	\begin{minipage}{0.32\textwidth}
		
		\begin{center}	
			$\boldsymbol{q(h)=h}$, 
			$\boldsymbol{\h(\rho)=\rho^2}$
			\vspace{-1mm}			
			\scalebox{1}{\includegraphics[width=\linewidth]{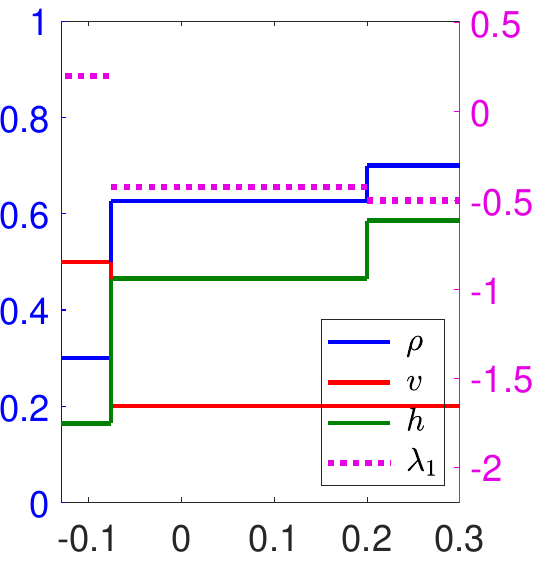}}

		\end{center}	
	\end{minipage}

\vspace{4mm}

	\begin{minipage}{0.32\textwidth}
		
		\begin{center}	  
			$\boldsymbol{q(h)=h^2}$, 
			$\boldsymbol{\h(\rho)=\rho^{\nicefrac{1}{2}}}$
			\vspace{-1mm}			
			\scalebox{1}{\includegraphics[width=\linewidth]{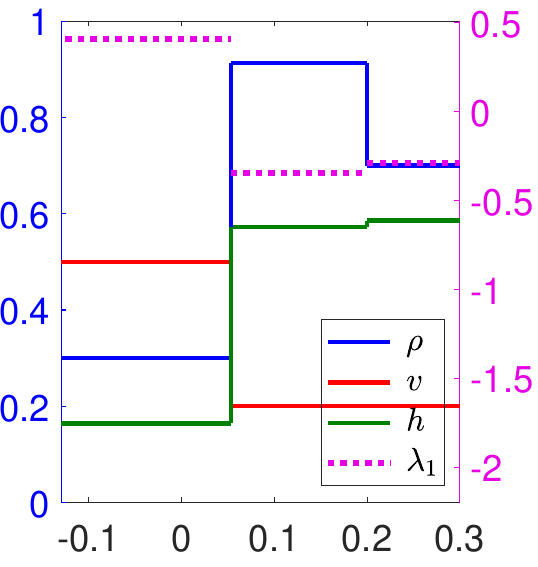}}

		\end{center}	
	\end{minipage}
	\hfil
	\begin{minipage}{0.32\textwidth}

		\begin{center}
			$\boldsymbol{q(h)=h^2}$, 
			$\boldsymbol{\h(\rho)=\rho}$
			\vspace{-1mm}			
			\scalebox{1}{\includegraphics[width=\linewidth]{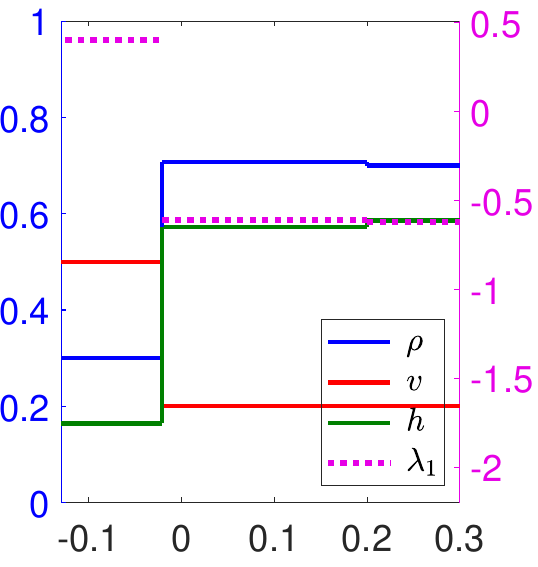}}	
			
		\end{center}
	\end{minipage}
	\hfil
	\begin{minipage}{0.32\textwidth}
		
		\begin{center}	
			$\boldsymbol{q(h)=h^2}$, 
			$\boldsymbol{\h(\rho)=\rho^2}$
			\vspace{-1mm}			
			\scalebox{1}{\includegraphics[width=\linewidth]{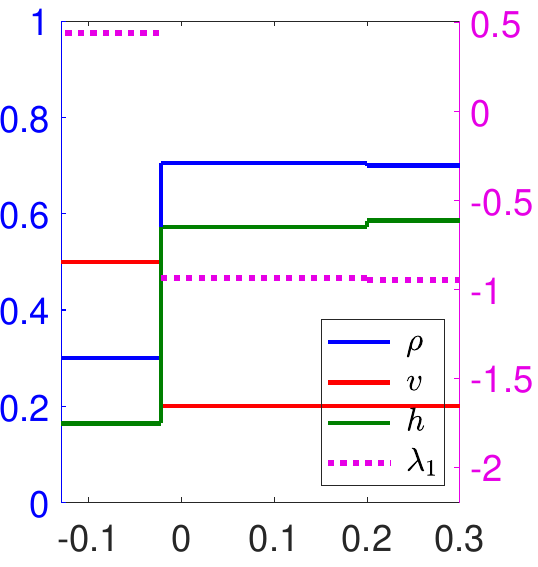}}

		\end{center}	
	\end{minipage}

\caption{Solutions to a Riemann problem with a shock, corresponding to the first field, and a contact with speed~$v_r$.}

\label{FigureShock1}	
\end{figure}

\section{Summary}
We have introduced a third-order traffic flow model. It has been justified by an amended and extended microscopic derivation of the Aw-Rascle-Zhang model. The hydrodynamic limit to the macroscopic level  results in a hyperbolic Temple class system. A relaxation to the Aw-Rascle-Zhang model has been introduced and a stability analysis has been performed. 
Chapman-Enskog-type expansions have been used as basic tool, which also yield conditions to properly choose model and relaxation parameters. 
Furthermore, relations to multiclass systems, which are based on velocity curves that are parameterized by  driver-dependent quantities,   have been established and the Riemann problem is discussed.

\section*{Appendix}

The following lemma states the equivalence between the formulations in Eulerian and Lagrangian coordinates. The proof follows basic arguments taken from~\cite{Aw:2002,Bagnerini:2003}, which are based on smoothness assumptions. However, we note that equivalence of hyperbolic systems in  Eulerian and Lagrangian coordinates can also be obtained in terms of weak entropy solutions~\cite{Wagner1987}. 
\medskip

\begin{lemma}
	The systems~\eqref{PiuLagrange} in Lagrangian and~\eqref{PiuS1} in Eulerian coordinates are equivalent. 
\end{lemma}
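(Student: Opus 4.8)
The plan is to establish the equivalence by transforming each equation of system~\eqref{PiuLagrange} from Lagrangian to Eulerian coordinates using the change-of-variables relations~\eqref{TransformsLagrange}, and conversely. First I would recall the chain rule for the coordinate change: if $\phi(t,x) = \Phi(T,X)$ with $T = t$ and $X = X(t,x)$, then $\partial_x \phi = \rho\,\partial_X \Phi$ and $\partial_t \phi = \partial_T \Phi - \rho v\,\partial_X \Phi$, and symmetrically $\partial_X \Phi = \tau\,\partial_x \phi$ and $\partial_T \Phi = \partial_t \phi + v\,\partial_x \phi$. These follow directly from~\eqref{TransformsLagrange}.

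The key steps, in order: (1) Treat the first Lagrangian equation $\partial_T \tau - \partial_X v = 0$. Using $\partial_T \tau = \partial_t \tau + v\,\partial_x \tau$ and $\partial_X v = \tau\,\partial_x v$, multiply through by $\rho = 1/\tau$ and use $\partial_t \tau = -\tau^2 \partial_t \rho$, $\partial_x \tau = -\tau^2 \partial_x \rho$ to recover $\partial_t \rho + \partial_x(\rho v) = 0$, which is the first equation of~\eqref{PiuS1}. This is the standard Lagrange-to-Euler mass-conservation computation. (2) For the second Lagrangian equation, $\partial_T(v + q(h)) = 0$ becomes $\partial_t(v + q(h)) + v\,\partial_x(v + q(h)) = 0$ immediately by the formula for $\partial_T$; this is exactly the second equation of~\eqref{PiuS2}, and by Lemma~\ref{LemmaEquivalencePiu} (which I may assume) it is equivalent to the second equation of~\eqref{PiuS1}. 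Alternatively, expand it directly: $\partial_t v + q'(h)\partial_t h + v\partial_x v + v q'(h)\partial_x h = 0$ and substitute the third equation to eliminate $\partial_t h$, yielding the non-conservative velocity equation with characteristic speed $v - q'(h)\h'(\rho)\rho$. (3) For the third Lagrangian equation, $\partial_T(h - \h(\rho)) = 0$ becomes $\partial_t(h - \h(\rho)) + v\,\partial_x(h - \h(\rho)) = 0$, i.e. $\partial_t h + v\partial_x h = \partial_t \h(\rho) + v\partial_x \h(\rho)$, which is precisely the third equation of~\eqref{PiuS1}. (4) For the converse direction, reverse each computation: start from~\eqref{PiuS1}, apply $\partial_t = \partial_T - \rho v\,\partial_X$ and $\partial_x = \rho\,\partial_X$, and check that each Eulerian equation collapses to the corresponding Lagrangian one; the only subtlety is rewriting the non-conservative velocity equation back into conservative-in-$T$ form $\partial_T(v + q(h)) = 0$, which again uses the third equation to handle the $q'(h)\h'(\rho)\rho\,\partial_x v$ term, mirroring step (2).

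The main obstacle is bookkeeping rather than conceptual: one must be careful that the coordinate change $x \mapsto X$ is invertible, which requires $\rho > 0$ and sufficient smoothness of the solution (so that $X(t,\cdot)$ is a $C^1$ diffeomorphism for each $t$), and one must consistently track whether a quantity is viewed as a function of $(t,x)$ or $(T,X)$ when applying the chain rule — it is easy to drop a $v\,\partial_x$ term. Since we restrict to smooth solutions with positive density, as in~\cite{Aw:2002,Bagnerini:2003}, no genuine difficulty arises, and the proof reduces to writing out the four chain-rule identities above and matching terms. I would present steps (1)--(3) for one direction in detail and remark that the converse follows by reversing every equality, citing~\cite{Aw:2002,Bagnerini:2003} for the fact that this correspondence is exact under the smoothness hypothesis.
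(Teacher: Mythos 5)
Your proposal is correct and follows essentially the same route as the paper's appendix proof: the chain-rule identities from~\eqref{TransformsLagrange} (in particular the material derivative $\partial_T = \partial_t + v\,\partial_x$), the transformation of the continuity equation between $\partial_T\tau-\partial_X v=0$ and $\partial_t\rho+\partial_x(\rho v)=0$, and the use of the third equation together with the continuity equation to recover the velocity equation with characteristic speed $v-q'(h)\h'(\rho)\rho$. The only cosmetic differences are that you optionally route the second equation through~\eqref{PiuS2} and Lemma~\ref{LemmaEquivalencePiu} and that you state explicitly the smoothness and $\rho>0$ assumptions needed for the coordinate change, which the paper leaves implicit by citing~\cite{Aw:2002,Bagnerini:2003}.
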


\medskip
\begin{proof}
	The transforms~\eqref{TransformsLagrange}     
	yield the expressions
	\begin{alignat*}{8}
		&	\partial_t \rho(t,x)
		&&=
		\partial_T \rho(T,X)
		+
		\partial_X \rho(T,X)
		\partial_t X(t,x)
		&&=
		\partial_T \rho(T,X)
		-
		\rho v
		\partial_X \rho(T,X),
		\\
		&	\partial_x \rho(t,x)
		&&=
		\partial_X \rho(T,X)
		\partial_x X(t,x)
		&&=
		\rho\partial_X \rho(T,X), \\
		&	\partial_T \rho(T,X)
		&&	=	
		\partial_t \rho(t,x)
		+
		\partial_x \rho(t,x)
		\partial_T x(T,X)
		&&	=
		\partial_t \rho(t,x)
		+
		v
		\partial_x \rho(t,x),\\
		&\partial_X \rho(T,X)
		&&=
		\partial_x \rho(t,x)
		\partial_X x(T,X)
		&&=
		\frac{1}{\rho}
		\partial_x \rho(t,x).
	\end{alignat*}

	\noindent
	The continuity equation reads as
	\begin{align*}
&	0=
	\partial_t \rho
	+\partial_x (\rho v)
	=
	\partial_T \rho
	-
	\rho v
	\partial_X \rho
	+
	\rho
	\partial_X
	(\rho v)
	=
	\partial_T\rho+
	\rho^2\partial_Xv \\
	&\Leftrightarrow\quad
	\partial_T \tau - \partial_X v=0
	\quad\text{for}\quad
	\tau=\frac{1}{\rho}.
	\end{align*}
	The third equation, namely the material derivative, satisfies~$
	\partial_T\big(
	h-\h(\rho)
	\big)=0
	$. 
	The ansatz~$\omega= v+q(h)$ yields
	\begin{align*}
		&\partial_T q(h)
		=
		q'(h)
		\partial_T h 
		=
		q'(h)
		\big(
		\partial_t \h(\rho)
		+v
		\partial_x \h(\rho)
		\big)
		=
		-
		q'(h)
		\h'(\rho)
		\rho \partial_x v,\\
		&
		0
		=
		\partial_T \Big( v+q(h) \Big)
		=
		\partial_t 
		v
		+
		\Big(
		v
		-
		q'(h)
		\h'(\rho)\rho
		\Big)
		\partial_x v.
	\end{align*}
\hfill	
\end{proof}

\bigskip

\section*{Acknowledgments}
This work was carried out within the activities of the PRIN PNRR Project 2022 No.~P2022JC95T ``Data-driven discovery and control of multiscale interacting artificial agent systems'', funded by MUR (Ministry of University and Research) and Next Generation EU -- European Commission.\\
The authors acknowledge the support of Sapienza University under Ateneo Project 2023 ``Modeling, numerical treatment of hyperbolic equations and optimal control problems''.\\
GV is member of the INdAM Research National Group of Scientific Computing (INdAM-GNCS).

\bigskip

\bibliographystyle{plain}
\bibliography{trafficbib}

          \end{document}